\newcommand{\excise}[1]{}
\newtheorem{thm}{Theorem}[section]
\newtheorem{lemma}[thm]{Lemma}
\newtheorem{cor}[thm]{Corollary}
\newtheorem{prop}[thm]{Proposition}
\newtheorem{conj}[thm]{Conjecture}
\theoremstyle{definition}
\newtheorem{notation}[thm]{Notation}
\numberwithin{equation}{section}
\renewcommand\>{\rangle}
\newcommand\<{\langle}
\newcommand\ZZ{\mathbb{Z}}
\newcommand\til{\mathord\sim}
\DeclareMathOperator\Betti{Betti} 
\DeclareMathOperator\lcm{lcm} 
\DeclareMathOperator\Ap{Ap} 
\DeclareMathOperator\supp{supp} 
\tikzset{snake it/.style={decorate, decoration=snake}}
\begin{document}

\mbox{}
\title[On numerical semigroups and $\ell_0$- and $\ell_\infty$-norms of factorizations]{On numerical semigroup elements and \\ the $\ell_0$- and $\ell_\infty$-norms of their factorizations}

\author[S.~Cyrusian]{Sogol Cyrusian}
\address{Mathematics Department\\University of California Santa Barbra\\Santa Barbara, CA 93106}
\email{sogol@ucsb.edu}

\author[A.~Domat]{Alex Domat}
\address{Mathematics Department\\Trinity College\\Hartford, CT 06106}
\email{alexander.domat@trincoll.edu}

\author[C.~O'Neill]{Christopher O'Neill}
\address{Mathematics Department\\San Diego State University\\San Diego, CA 92182}
\email{cdoneill@sdsu.edu}

\author[V.~Ponomarenko]{Vadim Ponomarenko}
\address{Mathematics Department\\San Diego State University\\San Diego, CA 92182}
\email{vponomarenko@sdsu.edu}

\author[E.~Ren]{Eric Ren}
\address{Mathematics Department\\Arizona State University\\Tempe, AZ 85287}
\email{reneric.2002@gmail.com}

\author[M.~Ward]{Mayla Ward}
\address{Mathematics Department\\Western Washington University\\Bellingham, WA 98225}
\email{maylacward@gmail.com}

\makeatletter
\@namedef{subjclassname@2020}{\textup{2020} Mathematics Subject Classification}
\makeatother
\subjclass[2020]{20M14,05E40}

\keywords{numerical semigroup, factorization, delta set}

\date{\today}

\begin{abstract}
A numerical semigroup $S$ is a cofinite, additively-closed subset of $\mathbb Z_{\ge 0}$ that contains 0, and a factorization of $x \in S$ is a $k$-tuple $z = (z_1, \ldots, z_k)$ where $x = z_1a_1 + \cdots + z_ka_k$ expresses $x$ as a sum of generators of $S = \langle a_1, \ldots, a_k \rangle$.  Much~of the study of non-unique factorization centers on factorization length $z_1 + \cdots + z_k$, which coincies with the $\ell_1$-norm of $z$ as the $k$-tuple.  In this paper, we study the $\ell_\infty$-norm and $\ell_0$-norm of factorizations, viewed as alternative notions of length, with particular focus on the generalizations $\Delta_\infty(x)$ and $\Delta_0(x)$ of the delta set $\Delta(x)$ from classical factorization length.  
We prove that the $\infty$-delta set $\Delta_\infty(x)$ is eventually periodic as a function of $x \in S$, classify $\Delta_\infty(S)$ and the 0-delta set $\Delta_0(S)$ for several well-studied families of numerical semigroups, and identify families of numerical semigroups demonstrating $\Delta_\infty(S)$ and $\Delta_0(S)$ can be arbitrarily long intervals and can avoid arbitrarily long subintervals.  
\end{abstract}

\maketitle


\section{Introduction}
\label{sec:intro}

A \emph{numerical semigroup} is a cofinite, additively closed set $S \subseteq \ZZ_{\ge 0}$ containing 0.  We~often specify a numerical semigroup via a list of generators, i.e.,
$$S = \<a_1, \ldots, a_k\> = \{z_1a_1 + \cdots + z_ka_k : z_i \in \ZZ_{\ge 0}\}.$$
As~ubiquitous mathematical objects, numerical semigroups arise in countless settings across the mathematics spectrum; see~\cite{numericalappl,numerical} for a thorough introduction.  
Most notably for this manuscript, numerical semigroups arise frequently in factorization theory~\cite{nonuniq} and discrete optimization~\cite{knapsacksurvey}.  

A \emph{factorization} of an element $x \in S$ is an expression
$$
x = z_1a_1 + \cdots + z_ka_k
$$
of $x$ with each $z_i \in \ZZ_{\ge 0}$.  The \emph{support} and \emph{length} of a factorization $z$ are
$$
\supp(z) = \{i : z_i > 0\},
\qquad \text{and} \qquad
\ell_1(z) = z_1 + \cdots + z_k,
$$
respectively.  We denote by
$$
\mathsf Z(x) = \{z \in \ZZ_{\ge 0}^k : x = z_1a_1 + \cdots + z_ka_k\}
\qquad \text{and} \qquad
\mathcal L(x) = \{\ell_1(z) : z \in \mathsf Z(x)\}
$$
the \emph{set of factorizations} and \emph{length set} of $x$, respectively.  Factorization lengths are a cornerstone of factorization theory, and numerous combinatorial invariants derived from length sets used to quantify and compare the non-uniqueness of factorizations across rings and semigroups~\cite{setsoflengthmonthly}.  
One of the more popular such invariants is the \emph{delta set}, which is defined on semigroup elements as
$$
\Delta(x) = \{c_i - c_{i-1} : i = 2, \ldots, r\}
\qquad \text{where} \qquad
\mathcal L(x) = \{c_1 < \cdots < c_r\},
$$
and defined on semigroups as $\Delta(S) = \bigcup_{x \in S} \Delta(x)$.  For numerical semigroups, $\Delta(x)$ is known to be eventually periodic as a function of $x$~\cite{deltaperiodic}, and $\Delta(S)$ is more varied than for some other well-studied families of semigroups~\cite{delta}, such as Krull monoids~\cite{subdeltas}.  

In this paper, we study the \emph{0-length} and \emph{$\infty$-length} of factorizations $z$, which are
$$
\ell_0(z) = |\supp(z)|
\qquad \text{and} \qquad
\ell_\infty(z) = \max(z_1, \ldots, z_k),
$$
respectively.  For each $p \in \{0,1,\infty\}$, we define the \emph{$p$-length set} of $x$ as
$$
\mathcal L_p(x) = \{\ell_p(z) : z \in \mathsf Z(x)\}.  
$$
(when $p = 1$, we recover the classical definitions).  
In discrete optimization, factorizations achieving minimal 0-length are known as \emph{sparse solutions} and have been studied in the context of numerical semigroups~\cite{quasi0norm} as well as for more general semigroups~\cite{bde09,sparsesensingfinitegeometry}.  Additionally, the asymptotic behavior of $\infty$-length was recently studied in~\cite{pnormasymp}, along with the extremal $\ell_p$-norms of factorizations for $p \in [1, \infty) \cap \ZZ$.  

In this paper, we study the \emph{$p$-delta set} of $x$, defined as
$$
\Delta_p(x) = \{c_i - c_{i-1} : i = 2, \ldots, r\}
\qquad \text{where} \qquad
\mathcal L_p(x) = \{c_1 < \cdots < c_r\},
$$
and the \emph{$p$-delta set} of $S$, defined as $\Delta_p(S) = \bigcup_{x \in S} \Delta_p(x)$.  

The contributions of this manuscript are two-fold.  First, we prove several structural results about the set $\mathcal L_\infty(x)$ for large elements $x \in S$.  Our results are reminiscent of the \emph{structure theorem for sets of length}, which drives much of the study of factorization theory~\cite{geroldingerlengthsets,setsoflengthmonthly} and a specialized version of which was recently proven for numerical semigroups~\cite{structurethmns}.  We derive as a consequence that $\Delta_\infty(x)$ is an eventually periodic function of $x \in S$ (Theorem~\ref{t:inftydeltaperiodic}), a result that is also known for the classical delta set~\cite{deltaperiodic} and joins a vast literature of eventual-peridicity results for large numerical semigroup elements~\cite{numericalsurvey}.  

Second, we characterize $\Delta_\infty(S)$ and $\Delta_0(S)$ for several well-studied families of numerical semigroups, and demonstrate via explicit families of numerical semigroups that $\Delta_\infty(S)$ and $\Delta_0(S)$ can each be arbitrarily long intervals and in general can contain arbitrarily long ``gaps''.  
Our results lead us to make the following conjecture.  

\begin{conj}\label{conj:deltarealization}
For every finite set $D \subset \ZZ_{\ge 1}$ with $1 \in D$, there exists numerical semigroups $S$ and $S'$ with $\Delta_0(S) = D$ and $\Delta_\infty(S') = D$.  
\end{conj}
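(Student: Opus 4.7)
The plan is to attack the two realization statements separately but in parallel, using a ``gadget and glue'' strategy indexed by $D = \{1 = d_1 < d_2 < \cdots < d_n\}$. In each case the overall goal is to construct, for each $d_i$, a small numerical semigroup whose $p$-delta set isolates the gap $d_i$ (together with the mandatory $1$), and then to merge these gadgets into a single numerical semigroup whose $p$-delta set is their union.

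For the $\Delta_0$ case, a natural gadget $S_d$ realizing the gap $d$ is a numerical semigroup with at least $d+2$ generators chosen so that some distinguished element $x_d$ has exactly two achievable support sizes differing by $d$. A candidate is $S_d = \langle a, a+1, \ldots, a+(d+1), N \rangle$ for a large auxiliary generator $N$: one aims to arrange that each factorization of $x_d$ either uses only the small consecutive generators (producing large support) or relies on $N$ to absorb most of the value (producing small support), with no intermediate support sizes reachable. For the $\Delta_\infty$ case, the gadget would be a numerical semigroup whose eventual periodic pattern guaranteed by Theorem~\ref{t:inftydeltaperiodic} encodes the gap $d$; the key is to engineer the generators so that a specific residue class modulo the multiplicity produces the jump $d$ in $\mathcal L_\infty$. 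Once these gadgets are in hand, a gluing $S = p_1 T_{d_1} + \cdots + p_n T_{d_n}$ with pairwise coprime scaling factors $p_i$ is the natural combining operation: if the scales are chosen carefully, one hopes that distinct blocks do not interact through factorization, so that $\Delta_p(S) = \bigcup_i \Delta_p(S_{d_i})$.

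The principal obstacle is controlling \emph{extraneous} differences. Even if each gadget has the desired $p$-delta set in isolation, the glued semigroup admits elements whose factorizations mix generators from multiple blocks, and such mixed factorizations can produce new intermediate $p$-lengths that either close the intended gaps or introduce new unintended ones. On the $\Delta_\infty$ side, Theorem~\ref{t:inftydeltaperiodic} reduces the verification to a finite inspection (checking the initial segment and one period), which is tractable in principle. But no such reduction is known for $\Delta_0$, so a natural prerequisite is to first prove an eventual periodicity statement for $\Delta_0(x)$ as a function of $x$, thereby yielding a finite certificate for $\Delta_0(S)$. With such a periodicity tool in hand, one could hope to choose the scaling factors $p_i$ in the gluing so large that no cross-block factorization falls within the ``active'' periodic window of any gadget, thereby closing the proof — and this last step is where I expect the real difficulty to lie.
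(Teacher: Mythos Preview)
The statement you are addressing is Conjecture~\ref{conj:deltarealization}, which the paper explicitly leaves open; there is no proof in the paper to compare against. What you have written is a research outline rather than a proof, and you yourself flag the final step as the hard one. A few concrete points are worth making about the strategy.

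First, on the $\Delta_0$ side, you propose as a prerequisite ``an eventual periodicity statement for $\Delta_0(x)$.'' This is already in the paper: Theorem~\ref{t:del0eventual} shows $\Delta_0(x) = \{1\}$ for all sufficiently large $x$, so $\Delta_0(x)$ is eventually constant. The consequence is that every element of $\Delta_0(S) \setminus \{1\}$ must be witnessed by a \emph{small} element of $S$, which actually helps your finite-verification idea but undercuts the framing of periodicity as a missing tool. Your proposed gadget $\langle a, a+1, \ldots, a+(d+1), N \rangle$ is an arithmetic progression plus one extra generator; note that by Theorem~\ref{t:del0families}(c) the arithmetic part alone already has $\Delta_0 = \{1,2\}$, so you would need the extra generator $N$ to both produce the gap $d$ and suppress the unwanted $2$, and no mechanism for the latter is offered.

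Second, on the $\Delta_\infty$ side, the gluing step is more dangerous than you indicate. Proposition~\ref{p:inftydelta} forces $[1,\min(g_1,g_2)] \cup \{g_1\} \subseteq \Delta_\infty(S)$ for every numerical semigroup, where $g_i = \gcd(a_j : j \ne i)$. When you glue via $S = p_1 T_{d_1} + \cdots + p_n T_{d_n}$ with large pairwise coprime $p_i$, the generators coming from block $T_{d_j}$ all share the factor $p_j$, so the resulting $g_i$'s can be large; in particular $g_1$ is divisible by $p_j$ for every block $j$ not containing $a_1$. This risks forcing a long initial interval into $\Delta_\infty(S)$ that has nothing to do with the target set $D$. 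Controlling extraneous differences is therefore not just a matter of separating the ``active windows'' of the gadgets; the structural constraints of Section~\ref{sec:structurethm} impose global lower bounds on $\Delta_\infty(S)$ that your gluing must respect, and the outline gives no indication of how to do so.
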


This part of our work is motivated by the \emph{delta set realization problem}~\cite{deltarealizationnumerical}, which makes an analogous claim for the classical delta set $\Delta(S)$.  The delta set realization problem is known to be difficult, in part because proving a given integer lies outside of $\Delta(S)$ necessitates a large amount of control over the factorization structure of $S$; see~\cite{delta} for examples.  Given this, and the technical nature of our arguments in Sections~\ref{sec:inftydeltas} and~\ref{sec:0deltas}, we suspect Conjecture~\ref{conj:deltarealization} to be difficult in general.

\section{A structure theorem for sets of \texorpdfstring{$\infty$}{infinity}-length}
\label{sec:structurethm}

\begin{notation}
Throughout this paper, $S = \<a_1,\ldots,a_k\>$ denotes a numerical semigroup with minimal generators $a_1 < a_2 < \cdots < a_k$.  
Additionally, throughout this section, 
$$
A = a_1 + \cdots + a_k,
\qquad
g_i = \gcd(\{a_j:i \neq j\}),
\qquad \text{and} \qquad
S_i = \<\tfrac{1}{g_i}a_j : j \ne i\>
$$
for each $i$.  Additionally, for each $i$, fix $a_i' \in \ZZ$ with $a_i' a_i \equiv 1 \bmod g_i$, let
$$
\mathsf Z(x,i) = \{z \in \mathsf Z(x) : z_i = \ell_\infty(z)\}
\qquad \text{and} \qquad
\mathcal L_\infty(x,i) = \{\ell_\infty(z) : z \in \mathsf Z(x,i)\},
$$
and let
$$
L_\infty(x,i) = \max \mathcal L_\infty(x,i)
\qquad \text{and} \qquad
l_\infty(x,i) = \min \mathcal L_\infty(x,i).
$$
\end{notation}

This section contains several structural results concerning the sets $\mathcal L_\infty(x)$, $\mathcal L_\infty(x,i)$, and $\Delta_\infty(x)$ for large $x \in S$.  We briefly outline these results here.  
\begin{itemize}
\item 
We prove in Theorem~\ref{t:atlantis} that each $\mathcal L_\infty(x,i)$ forms what is known as an \emph{almost arithmetic sequence (AAP)} (i.e., an arithmetic sequence with some missing values near either end), a central ingredient to the classical \emph{structure theorem for sets of length}~\cite{geroldingerlengthsets}.  

\item 
We prove that in the AAP description of $\mathcal L_\infty(x,i)$, the ``missing values'' near either end depend only on the equivalence class of $x$ modulo cetain products of the $a_i$'s and $g_i$'s (Theorem~\ref{t:periodicgaps}).  This result is reminiscent of \cite[Theorem~4.2]{structurethmns}, a more detailed version of the structure theorem for sets of length recently proven for numerical semigroups.  

\item 
Proposition~\ref{p:inftydelta} and Theorem~\ref{t:inftydeltaperiodic} are the culmination of these results, collecting the conclusions drawn about $\Delta_\infty(x)$ for large $x$ and $\Delta_\infty(S)$.  

\end{itemize}
The depiction in Figure~\ref{fig:towerdiagram} illustrates how the structure of each $\mathcal L_\infty(x,i)$ for large $x$ contributes to that of $\mathcal L_\infty(x)$ and $\Delta_\infty(x)$.  

Recall that the \emph{Frobenius number} of $S$ is $\mathsf F(S) = \max(\ZZ_{\ge 0} \setminus S)$, and the \emph{Ap\'ery set} of $S$ with respect to a nonzero element $m \in S$ is
$$
\Ap(S;m) = \{n \in S : n - m \notin S\}.
$$
It is known $\Ap(S;m) = \{0, w_1, \ldots, w_{m-1}\}$, where each $w_i \equiv i \bmod m$ is the smallest element of $S$ in its equivalence class modulo $m$.  

\begin{figure}[t!]
\adjustbox{scale = 0.8,center}{
\begin{tikzpicture}[every text node part/.style={align=right}]
    \draw (1, 12) to (2, 12) node [black, right] {$L_\infty(x,1)$}; 
    \draw [dashed, snake it] (1.5, 12) to (1.5, 10.5); 
    \draw (1, 10.5) to (2, 10.5) node [black, right] {$L_\infty(x,1)-B_1$}; 
    \draw [black, fill] (1.5, 10) circle [radius = 0.02]; 
    \draw [black, fill] (1.5, 9.5) circle [radius = 0.02]; 
    \draw [black, fill] (1.5, 9) circle [radius = 0.02]; 
    \draw [black, fill] (1.5, 8.5) circle [radius = 0.02];
    \draw [black, fill] (1.5, 8) circle [radius = 0.02];
    \draw [black, fill] (1.5, 7.5) circle [radius = 0.02];
    \draw [magenta, decorate, decoration = {brace}] (1.6, 7.5) to (1.6, 7) node [magenta, right, yshift = 6pt, xshift = 1pt] {$g_1$};
    \draw [black, fill] (1.5, 7) circle [radius = 0.02];
    \draw [black, fill] (1.5, 6.5) circle [radius = 0.02];
    \draw [black, fill] (1.5, 6) circle [radius = 0.02];
    \draw [black, fill] (1.5, 5.5) circle [radius = 0.02];
    \draw [black, fill] (1.5, 5) circle [radius = 0.02]; 
    \draw [black, fill] (1.5, 4.5) circle [radius = 0.02]; 
    \draw [black, fill] (1.5, 4) circle [radius = 0.02]; 
    \draw (1, 3.5) to (2, 3.5) node [black, right] {$l_\infty(x)+a_k$}; 
    \draw [dashed, snake it] (1.5, 3.5) to (1.5, 2.5); 
    \draw (1, 2.5) to (2, 2.5) node [black, right] {$l_\infty(x,1)$}; 
    
    \draw (4.5, 9.25) to (5.5, 9.25) node [black, right] {$L_\infty(x,2)$}; 
    \draw [dashed, snake it] (5, 9.25) to (5, 8.25); 
    \draw (4.5, 8.25) to (5.5, 8.25) node [black, right] {$L_\infty(x,2)-B_2$}; 
    \draw [black, fill] (5, 7.75) circle [radius = 0.02]; 
    \draw [black, fill] (5, 7.25) circle [radius = 0.02]; 
    \draw [black, fill] (5, 6.75) circle [radius = 0.02]; 
    \draw [magenta, decorate, decoration = {brace}] (5.1, 6.75) to (5.1, 6.25) node [magenta, right, yshift = 6pt, xshift = 1pt] {$g_2$};
    \draw [black, fill] (5, 6.25) circle [radius = 0.02];
    \draw [black, fill] (5, 5.75) circle [radius = 0.02];
    \draw [black, fill] (5, 5.25) circle [radius = 0.02];
    \draw [black, fill] (5, 4.75) circle [radius = 0.02];
    \draw [black, fill] (5, 4.25) circle [radius = 0.02]; 
    \draw [black, fill] (5, 3.75) circle [radius = 0.02]; 
    \draw (4.5, 3.5) to (5.5, 3.5) node [black, right] {$l_\infty(x)+a_k$}; 
    \draw [dashed, snake it] (5, 3.5) to (5, 2); 
    \draw (4.5, 2) to (5.5, 2) node [black, right] {$l_\infty(x,2)$}; 
    
    \draw [magenta] (0.5, 12) to (0.9, 12);
    \draw [magenta] (0.5, 10.5) to (0.9, 10.5);
    \draw [magenta] (0.5, 9.5) to (1.4, 9.5);
    \draw [magenta] (0.5, 9.25) to (4.4, 9.25);
    \draw [magenta] (0.5, 8.25) to (4.4, 8.25);
    \draw [magenta] (0.5, 3.5) to (0.9, 3.5);
    \draw [magenta, decorate, decoration = {brace, amplitude=5pt}] (0.5, 10.5) to (0.5, 12) node [magenta, left, yshift = -22pt, xshift = -5pt] {Region (iii) in\\Proposition~\ref{p:inftydelta}};
    \draw [magenta, decorate, decoration = {brace, amplitude=5pt}] (0.5, 9.5) to (0.5, 10.5) node [magenta, left, yshift = -12pt, xshift = -5pt] {$g_1$ Spacing};
    \draw [magenta, decorate, decoration = {brace, amplitude=5pt}] (0.5, 8.25) to (0.5, 9.25) node [magenta, left, yshift = -14pt, xshift = -5pt] {Region (ii) in\\Proposition~\ref{p:inftydelta}};
    \draw [magenta, decorate, decoration = {brace, amplitude=5pt}] (0.5, 3.5) to (0.5, 8.25) node [magenta, left, yshift = -66pt, xshift = -5pt] {$[1,\min(g_1, g_2)]$\\ Spacing};
    \draw [magenta, decorate, decoration = {brace, amplitude=5pt}] (0.5, 1.5) to (0.5, 3.5) node [magenta, left, yshift = -29pt, xshift = -5pt] {Region (i) in\\Proposition~\ref{p:inftydelta}};
    \draw [magenta, decorate, decoration = {brace, amplitude = 5pt}] (8.5, 8.25) to (8.5, 6.6) node [magenta, right, xshift = 5pt, yshift = 22pt] {For all $x \gg 0$, gaps of every\\size in $[1,\min(g_1,g_2)]$ occur\,};

    \draw (8, 6.5) to (9, 6.5) node [black, right] {$L_\infty(x,3)$}; 
    \draw [dashed, snake it] (8.5, 6.5) to (8.5, 1.5);
    \draw [very thick, loosely dotted] (10, 6) to (11, 5.5);
    \draw (11.5, 5.25) to (12.5, 5.25) node [black, right] {$L_\infty(x,k)$}; 
    \draw [dashed, snake it] (12, 5.25) to (12, 1.5);

    \draw (0.5, 1.5) to (13, 1.5) node [black, right] {$l_\infty(x)$}; 
\end{tikzpicture}
}
\caption{Diagram of $\infty$-length set elements for large $x \in S$, where the $B_i$ are defined in Theorem~\ref{t:atlantis}}
\label{fig:towerdiagram}
\end{figure}
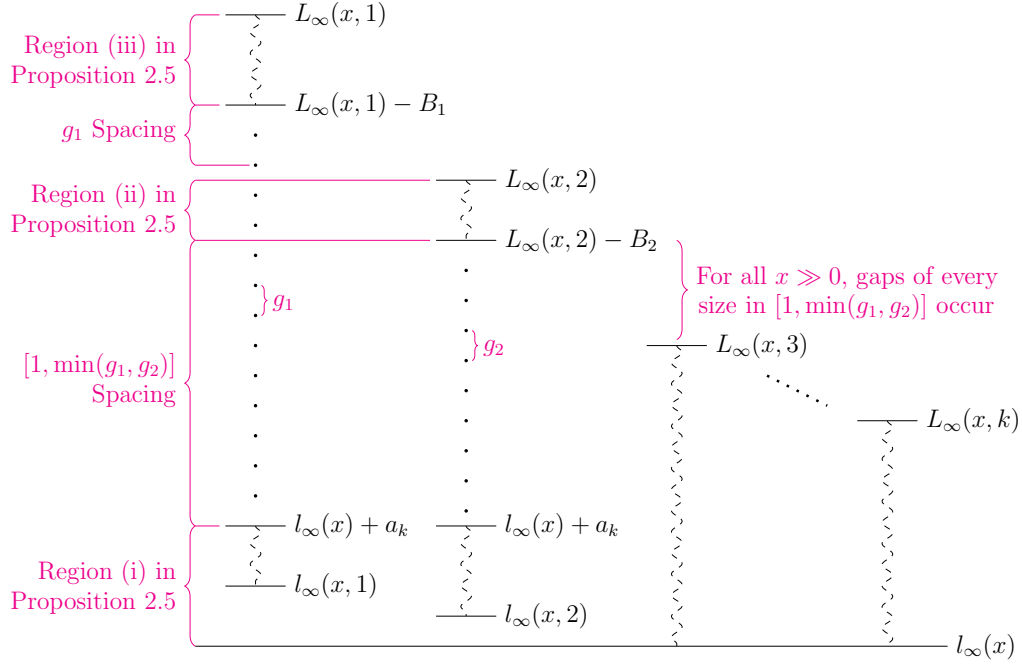

\begin{lemma}\label{l:mininftybounds}
For ever $x \in S$, the following inequalities hold:
\begin{enumerate}[(a)]
\item 
$\tfrac{1}{A}x \le l_\infty(x) \le \tfrac{1}{A}x + a_k$; and

\item 
for each $i$, $\tfrac{1}{a_i}x - ka_k \le L_\infty(x,i) \le \tfrac{1}{a_i}x$.  

\end{enumerate}
\end{lemma}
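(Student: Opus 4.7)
The plan for part (a) is as follows. The lower bound $\tfrac{1}{A}x \le l_\infty(x)$ is immediate: for any $z \in \mathsf Z(x)$, the inequality $x = \sum_i z_i a_i \le (\max_i z_i) \sum_i a_i = A \cdot \ell_\infty(z)$ holds, and minimizing over $z \in \mathsf Z(x)$ gives $l_\infty(x) \ge x/A$. For the upper bound, my strategy is to construct a \emph{balanced} factorization $z$ of $x$ with $\max_i z_i - \min_i z_i \le a_k$; combined with $A \cdot \min_i z_i \le x$, this yields $\ell_\infty(z) \le \min_i z_i + a_k \le x/A + a_k$.

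To produce the balanced factorization, I will use a trading argument. Let $z^* \in \mathsf Z(x)$ minimize the sorted-decreasing coordinate tuple $(z^*_{(1)}, z^*_{(2)}, \ldots, z^*_{(k)})$ lexicographically (which exists by finiteness of $\mathsf Z(x)$), and suppose for contradiction that $z^*_j - z^*_l > a_k$ for some $j$ with $z^*_j = \max_i z^*_i$ and some $l$ with $z^*_l = \min_i z^*_i$. Since $z^*_j > z^*_l + a_k \ge a_l$, the swap $z^*_j \mapsto z^*_j - a_l$ and $z^*_l \mapsto z^*_l + a_j$ produces another factorization $z'$ of $x$. Because $z'_j = z^*_j - a_l < z^*_j$ and $z'_l = z^*_l + a_j \le z^*_l + a_k < z^*_j$, both modified coordinates land strictly below $\max z^*$, so the multiplicity of $\max z^*$ in the sorted tuple drops by one, and the sorted-decreasing tuple of $z'$ is lexicographically strictly smaller, contradicting the choice of $z^*$.

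For part (b), the upper bound is immediate: any $z \in \mathsf Z(x,i)$ has $z_i = \ell_\infty(z)$ and $z_j \ge 0$ for all $j$, so $x = \sum_j z_j a_j \ge z_i a_i$ forces $z_i \le x/a_i$, and taking the max gives $L_\infty(x, i) \le x/a_i$. For the lower bound, I assume $\mathsf Z(x, i) \ne \emptyset$ (otherwise the claim is vacuous; note also that the right-hand side is nonpositive when $x \le a_i \cdot k a_k$). Starting from any $z \in \mathsf Z(x, i)$, I repeatedly apply the one-sided swap $z_j \mapsto z_j - a_i$, $z_i \mapsto z_i + a_j$ for any $j \ne i$ with $z_j \ge a_i$. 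Each swap strictly increases $z_i$ while leaving $z_i$ as the coordinate-wise maximum (since $z_i$ went up and $z_j$ went down), so $z$ remains in $\mathsf Z(x, i)$, and the process terminates by the upper bound. At termination, $z_j < a_i$ for every $j \ne i$, whence $z_i a_i > x - a_i \sum_{j \ne i} a_j = x - a_i(A - a_i)$, giving $z_i > x/a_i - (A - a_i) \ge x/a_i - k a_k$ since $A \le k a_k$.

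I expect the main technical subtlety to be the lex-reduction step in part (a) when the maximum coordinate is attained with multiplicity; handling this is precisely why I compare full sorted-decreasing tuples rather than just $\max z$. After the swap, $m - 1$ coordinates still equal the old maximum (where $m$ was the old multiplicity), and the $m$th entry of the new sorted tuple strictly decreases below $\max z^*$, which is exactly the lex strict decrease that drives the contradiction.
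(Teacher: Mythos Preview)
Your proof is correct. Parts~(a) lower bound and~(b) match the paper's argument essentially verbatim (for~(b) the paper phrases the trading step as ``$z_j < a_i$ by maximality of $z_i$'' rather than iterating swaps, but this is the same idea).

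The genuine difference is the upper bound in~(a). The paper writes $x = a + qA$ with $a \in \Ap(S;A)$, proves $l_\infty(a) \le a_k$ by trading (using that some coordinate of any factorization of $a$ must vanish), and then invokes an external result \cite[Theorem~2.6]{pnormasymp} to conclude $l_\infty(x) = l_\infty(a) + q \le a_k + q \le \tfrac{1}{A}x + a_k$. Your lex-minimization of the sorted-decreasing tuple accomplishes the same bound directly, without the Ap\'ery decomposition or the citation. What your approach buys is self-containment and a slightly sharper intermediate statement (the extremal factorization satisfies $\max z^* - \min z^* \le a_k$ rather than merely $\ell_\infty(z^*) \le \tfrac{1}{A}x + a_k$); what the paper's approach buys is a cleaner connection to the Ap\'ery structure used elsewhere in Section~\ref{sec:structurethm}.
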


\begin{proof}
Letting $z \in \mathsf Z(x)$ with $\ell_\infty(z) = l_\infty(x)$, we see
$$
x = z_1a_1 + \cdots + z_ka_k
\le \ell_\infty(z) a_1 + \cdots + \ell_\infty(z) a_k
= l_\infty(x) A.
$$

Next, write $x = a + qA$ for $a \in \Ap(S;A)$.  We claim $l_\infty(a) \le a_k$.  Indeed, by way of contradiction, fix a factorization $z \in \mathsf Z(a)$ with $\ell_\infty(z) = l_\infty(a)$, and assume some $z_j > a_k$.  Some $z_i' = 0$ since $a \in \Ap(S;A)$, so trading $a_i$ copies of $a_j$ for $a_j$ copies of $a_i$ yields a factorization $z' \in \mathsf Z(a)$ with strictly fewer copies of $a_j$ and no new coordinates larger than $a_k$.  After applying such a trade to each maximal entry in $z$, we obtain a factorization $z'$ with $\ell_\infty(z') < \ell_\infty(z) = l_\infty(a)$, which is a contradiction.  

Letting $z \in \mathsf Z(a)$ with $\ell_\infty(z) = l_\infty(a)$, \cite[Theorem~2.6]{pnormasymp} implies $z'' = (z_1 + q, \ldots, z_k + q)$ is a factorization of $x = a + qA$ with $l_\infty(x) = \ell_\infty(z'') \le q + a_k$.  Thus,
$$l_\infty(x) \le q + a_k = \tfrac{1}{A}(x - a) + a_k \le \tfrac{1}{A} x + a_k,$$

Proceeding to part~(b), suppose $z \in \mathsf Z(x)$ satisfies $\ell_\infty(z) = z_i = L_\infty(x,i)$.  Then 
$$
L_\infty(x,i)a_i
\le L_\infty(x,i) + \sum_{j \ne i} z_ja_j
= x.
$$
Additionally, we must have $z_j < a_i$ for $j \ne i$, as otherwise one could trade $a_i$ copies of $a_j$ for $a_j$ copies of $a_i$ and constradict the maximality of $z_i$.  As such, 
$$
x \le L_\infty(x,i)a_i + \sum_{j \ne i} z_ja_j
< L_\infty(x,i)a_i + \sum_{j \ne i} a_ia_k
\le L_\infty(x,i)a_i + ka_ia_k
$$
from which the last remaining inequality is immediately obtained.  
\end{proof}

\begin{thm}\label{t:atlantis}
For each $i = 1, \ldots, k$, there exists $B_i \in \ZZ$ such that
$$
[\tfrac{1}{A}x + a_k, \tfrac{1}{a_i}x - B_i] \cap (g_i\ZZ + a_i'x) \subseteq \mathcal L_\infty(x,i) \subseteq g_i\ZZ + a_i' x.
$$
\end{thm}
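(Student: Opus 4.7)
The plan is to prove the two inclusions separately.

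The right-hand inclusion $\mathcal L_\infty(x,i) \subseteq g_i\ZZ + a_i'x$ is a congruence check: for $z \in \mathsf Z(x,i)$ with $n := z_i = \ell_\infty(z)$, the identity $x - na_i = \sum_{j \ne i} z_j a_j$ shows $g_i \mid x - na_i$, so $na_i \equiv x \pmod{g_i}$, and multiplying by $a_i'$ (which exists because $\gcd(a_i,g_i) = \gcd(a_1,\ldots,a_k) = 1$) gives $n \equiv a_i'x \pmod{g_i}$.

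For the left-hand inclusion, fix $n \in g_i\ZZ + a_i'x$ with $\tfrac{1}{A}x + a_k \le n \le \tfrac{1}{a_i}x - B_i$, where $B_i$ is to be specified. Setting $y := x - na_i$, the goal is to find a factorization $(z_j)_{j \ne i}$ of $y$ in $\langle a_j : j \ne i\rangle$ with every $z_j \le n$; declaring $z_i := n$ will then yield $z \in \mathsf Z(x,i)$ with $\ell_\infty(z) = n$. The congruence on $n$ forces $g_i \mid y$, and $n \le \tfrac{1}{a_i}x - B_i$ gives $y \ge B_i a_i$. Choosing any integer $B_i$ with $B_i a_i \ge g_i(\mathsf F(S_i) + 1)$ then forces $y/g_i \ge \mathsf F(S_i) + 1 \in S_i$, and factorizations of $y$ in $\langle a_j : j \ne i\rangle$ correspond to factorizations of $y/g_i$ in $S_i$ via the identity on coordinate vectors. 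Applying Lemma~\ref{l:mininftybounds}(a) to the numerical semigroup $S_i$, whose generators sum to $(A-a_i)/g_i$ with maximum at most $a_k/g_i$, produces a factorization satisfying
$$
\max_{j \ne i} z_j \;\le\; \frac{y/g_i}{(A-a_i)/g_i} + \frac{a_k}{g_i} \;=\; \frac{y}{A - a_i} + \frac{a_k}{g_i}.
$$
Substituting $y = x - na_i$ and rearranging, this upper bound is at most $n$ exactly when $n \ge \tfrac{x}{A} + \tfrac{a_k(A - a_i)}{g_i A}$, which is implied by the hypothesis $n \ge \tfrac{x}{A} + a_k$ since $\tfrac{A-a_i}{A} \le 1$ and $g_i \ge 1$.

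The main obstacle, in my view, is bridging Lemma~\ref{l:mininftybounds}(a) — stated only for numerical semigroups — to the subsemigroup $\langle a_j : j \ne i\rangle$, which is not numerical when $g_i > 1$. I would handle this by passing to the numerical semigroup $S_i$ and invoking the coordinate-vector bijection described above, so that the bound from Lemma~\ref{l:mininftybounds}(a) applied in $S_i$ transfers directly to factorizations of $y$. The remaining steps are routine inequality manipulations, and for small $x$ the stated interval on the left is empty, in which case the inclusion holds vacuously.
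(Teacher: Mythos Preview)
Your proof is correct and follows essentially the same approach as the paper: both prove the right inclusion by the congruence argument, then for the left inclusion set $y = x - na_i$, use the upper bound on $n$ together with the choice of $B_i$ to force $y/g_i > \mathsf F(S_i)$, and invoke Lemma~\ref{l:mininftybounds}(a) to bound the $\infty$-length of a factorization of $y$ by $n$ via the same inequality manipulation. The only minor difference is that the paper applies Lemma~\ref{l:mininftybounds}(a) directly to $g_iS_i$ (implicitly relying on the fact that its proof does not use cofiniteness), whereas you explicitly pass to the numerical semigroup $S_i$ via the coordinate bijection; this yields the slightly sharper constant $a_k/g_i$ in place of $a_k$, but both are absorbed by the hypothesis $n \ge \tfrac{1}{A}x + a_k$.
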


\begin{proof}
The second containment holds since $\ell \in \mathcal L_\infty(x,i)$ implies $x - \ell a_i \in g_iS_i$.  
Define
$$
B_i = \tfrac{1}{a_i}g_i(F(S_i) + 1)
$$
and fix $\ell \in g_i\ZZ + a_i' x$ with $\tfrac{1}{A}x + a_k \le \ell \le \tfrac{1}{a_i}x - B_i$.  Since 
$$
x - \ell a_i
= a_i (\tfrac{1}{a_i}x - \ell)
\ge g_i(F(S_i) + 1),
$$
we have $x - \ell a_i \in g_iS_i$.  Moreover, we claim $x - \ell a_i$ has a factorization in $g_iS_i$ of $\infty$-length at most $\ell$.  
Indeed, notice that
$$
x - \ell a_i + a_k(A - a_i)
\le x + a_kA - \ell a_i
= (\tfrac{1}{A}x + a_k)A - \ell a_i
\le \ell (A - a_i),
$$
from which we obtain
$$
\tfrac{1}{A - a_i}(x - \ell a_i) + \max(\{a_j : j \ne i\})
\le \tfrac{1}{A - a_i}(x - \ell a_i) + a_k
\le \ell.
$$
Now applying Lemma~\ref{l:mininftybounds} to $g_iS_i$ implies $x - \ell a_i$ has a factorization in $g_iS_i$ of $\infty$-length at most $\ell$, which completes the proof.   
\end{proof}

\begin{thm}\label{t:periodicgaps}
Fix $B, B' > 0$.  
For all $x \gg 0$, we have
$$
\mathcal L_\infty(x + a_i, i) \cap [\tfrac{1}{a_i}(x + a_i) - B, \infty) = 1 + \big( \mathcal L_\infty(x,i) \cap [\tfrac{1}{a_i}x - B, \infty) \big)
$$
for each $i$, as well as
$$
\mathcal L_\infty(x + A) \cap [0, \tfrac{1}{A}(x + A) + B'] = 1 + \big( \mathcal L_\infty(x) \cap [0, \tfrac{1}{A}x + B'] \big)
$$
In particular, these hold whenever $x > a_i^2C + a_iB$ and $x > \tfrac{1}{a_i}A(A - a_i)B'$, respectively, where $C = \lceil \tfrac{1}{a_j}(B + 1) \rceil$.  
\end{thm}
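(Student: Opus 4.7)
The plan is to establish each equality by showing both inclusions. In both cases the ``upward'' inclusion $\supseteq$ is immediate from an additive shift of factorizations, while the ``downward'' inclusion $\subseteq$ requires $x$ to be large enough to ensure the inverse shift yields a factorization of the desired form.

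For the first equation, the $\supseteq$ direction is straightforward: given $\ell \in \mathcal L_\infty(x,i)$ with $\ell \ge \tfrac{1}{a_i}x - B$, pick $z \in \mathsf Z(x,i)$ with $z_i = \ell_\infty(z) = \ell$, and let $z'$ be obtained by incrementing the $i$\textsuperscript{th} coordinate of $z$ by one (i.e., $z' = z + e_i$ with $e_i$ the standard basis vector).  Then $z'_j = z_j \le \ell < \ell + 1 = z'_i$ for $j \ne i$, so $z' \in \mathsf Z(x + a_i, i)$ with $\ell_\infty(z') = \ell + 1$, and the interval condition is direct.  For $\subseteq$, take $\ell + 1 \in \mathcal L_\infty(x+a_i,i)$ with $\ell + 1 \ge \tfrac{1}{a_i}(x+a_i) - B$ and $z' \in \mathsf Z(x+a_i,i)$ realizing $z'_i = \ell_\infty(z') = \ell + 1$, and try $z = z' - e_i$.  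The only thing to verify is $z'_j \le \ell$ for $j \ne i$, so that $z_i$ remains the largest coordinate of $z$.  The key computation
$$
\sum_{j \ne i} z'_j a_j = (x + a_i) - (\ell + 1) a_i \le a_i B
$$
(using the lower bound on $\ell + 1$) yields $z'_j \le a_i B / a_j$ for each $j \ne i$; for $x > a_i^2 C + a_i B$ one then checks that $\ell \ge \tfrac{1}{a_i}x - B > a_i C \ge a_i B / a_j$, so $z'_j < \ell + 1$ as needed.

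The second equation runs analogously with $e_i$ replaced by the all-ones vector.  For $\supseteq$, given $\ell \in \mathcal L_\infty(x)$ with $\ell \le \tfrac{1}{A}x + B'$ and a factorization $z$ of $x$ with $\ell_\infty(z) = \ell$, setting $z' = z + (1, \ldots, 1)$ produces a factorization of $x + A$ with $\ell_\infty(z') = \ell + 1$, which lands in the desired interval.  For $\subseteq$, take $z' \in \mathsf Z(x+A)$ with $\ell_\infty(z') = \ell + 1 \le \tfrac{1}{A}(x+A) + B'$ and try $z = z' - (1, \ldots, 1)$; the obstruction now is that some coordinate $z'_j$ might vanish.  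If $z'_j = 0$, then $(\ell + 1)(A - a_j) \ge \sum_{k \ne j} z'_k a_k = x + A$, which combined with the upper bound on $\ell + 1$ rearranges to $(x+A) a_j \le B' A(A - a_j)$, contradicting the stated lower bound on $x$ for the appropriate choice of $j$.

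The main obstacle in both cases is the ``downward'' inclusion, which requires that the natural inverse shift ($z' - e_i$ or $z' - (1,\ldots,1)$) be a valid factorization of the correct form.  The unifying mechanism is that proximity to an extremal value of $\ell_\infty$ forces structural rigidity on $z'$: near $L_\infty(x,i)$ the coordinates other than the $i$\textsuperscript{th} are uniformly bounded, while near $l_\infty(x)$ every coordinate must be positive.  The quantitative thresholds on $x$ appear directly when these rigidity statements are phrased as inequalities.
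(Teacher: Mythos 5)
Your proof is correct and follows the same overall strategy as the paper's: the $\supseteq$ inclusions come from the obvious additive shifts $z \mapsto z + e_i$ and $z \mapsto z + (1,\ldots,1)$, and the $\subseteq$ inclusions come from showing the inverse shifts remain valid factorizations of the right form, which requires structural rigidity near the extremes of the $\infty$-length set. Your handling of the second equality, and of both $\supseteq$ directions, matches the paper's essentially line for line. The one place you diverge is the rigidity argument in the first $\subseteq$ direction: you bound $\sum_{j \neq i} z'_j a_j = (x + a_i) - (\ell+1)a_i \le a_iB$ directly from the lower bound on $\ell + 1$, then conclude $z'_j \le a_iB/a_j < a_iC < \tfrac{1}{a_i}x - B \le \ell < z'_i$ for each $j \neq i$. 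The paper instead argues by contradiction: supposing $z'_j = z'_i$ for some $j \neq i$, it performs the trade replacing $a_iC$ copies of $a_j$ by $a_jC$ copies of $a_i$ to push the $i$-th coordinate strictly above $\tfrac{1}{a_i}x$, contradicting the upper bound on $L_\infty(x,i)$ from Lemma~\ref{l:mininftybounds}(b). Both routes yield exactly the stated threshold $x > a_i^2C + a_iB$; yours is slightly more elementary in that it avoids the trade and the appeal to Lemma~\ref{l:mininftybounds}(b), deriving the coordinate separation $z'_j < z'_i$ purely from the factorization identity.
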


\begin{proof}
One can readily check $z \in \mathsf Z(x,i)$ implies $z + e_i \in \mathsf Z(x + a_i, i)$, which shows one containment in the first equality.  For the converse direction, we first claim any factorization $z \in \mathsf Z(x,i)$ with $\ell_\infty(z) = z_i \ge \tfrac{1}{a_i}(x + a_i) - B$ has $z_i > z_j$ for all $j \ne i$.  Indeed, if $z_j = z_i$ for some~$j$, then 
$$
z_j
\ge \tfrac{1}{a_i}x - B
> a_iC + B - B
\ge a_iC,
$$
so trading $a_iC$ copies of $a_j$ in $z$ for $a_jC$ copies of $a_i$ yields a factorization in $\mathsf Z(x,i)$ with $i$-th coordinate
$$
z_i + a_jC
= z_i + a_j\lceil \tfrac{1}{a_j}(B + 1) \rceil
> \tfrac{1}{a_i}x - B + B
= \tfrac{1}{a_i}x
$$
which is impossible by Lemma~\ref{l:mininftybounds}(b).  
Having now proven the claim, any $z' \in \mathsf Z(x + a_i,i)$ has $z' - e_i \in \mathsf Z(x,i)$, and the first equality is proven.  

Analogously for the second equality, any $z \in \mathsf Z(x)$ has $z' = z + e_1 + \cdots + e_k \in \mathsf Z(x + A)$.  For the reverse containment, we claim any $z \in \mathsf Z(x)$ with $\ell_\infty(z) \le \tfrac{1}{A}x + B'$ has no zero entries.  Indeed, if some $z_i = 0$, then 
\begin{align*}
x
&= \sum_{j \ne i} z_j a_j
\le \sum_{j \ne i} (\tfrac{1}{A}x + B')a_j
= (\tfrac{1}{A}x + B')(A - a_i)
= x - \tfrac{1}{A}a_i x + (A - a_i)B'
\\
&< x - (A - a_i)B' + (A - a_i)B'
= x
\end{align*}
a contradiction.  As such, any factorization $z' \in \mathsf Z(x + A)$ with $\ell_\infty(z') \le \tfrac{1}{A}x + B' + 1$ has $z' - e_1 - \cdots - e_k \in \mathsf Z(x)$, thereby completing the proof.  
\end{proof}

\begin{prop}\label{p:inftydelta}
For all $x \gg 0$, we have $[1, \min(g_1,g_2)] \cup \{g_1\} \subseteq \Delta_\infty(x)$.  Moreover, if $\ell < \ell'$ are successive elements of $\mathcal L_\infty(x)$ with $\ell' - \ell \notin [1, \min(g_1,g_2)] \cup \{g_1\}$, then at least one of $\ell$ and $\ell'$ lies in one of the following intervals:
$$
\text{(i) } [\tfrac{1}{A}x, \tfrac{1}{A}x + a_k];
\qquad
\text{(ii) } [\tfrac{1}{a_2}x - B_2, \tfrac{1}{a_2}x]; 
\qquad \text{or} \qquad
\text{(iii) } [\tfrac{1}{a_1}x - B_1, \tfrac{1}{a_1}x].
$$
\end{prop}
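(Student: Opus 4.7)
The plan is to decompose the range $[\tfrac{1}{A}x,\tfrac{1}{a_1}x]$ containing $\mathcal L_\infty(x)$ into the three edge regions (i), (ii), (iii) from the statement together with two bulk regions $\alpha:=(\tfrac{1}{A}x+a_k,\tfrac{1}{a_2}x-B_2)$ and $\beta:=(\tfrac{1}{a_2}x,\tfrac{1}{a_1}x-B_1)$, then use Theorem~\ref{t:atlantis} to describe $\mathcal L_\infty(x)$ explicitly inside each bulk region. A preliminary observation is that $\gcd(g_1,g_2)=1$, since any common divisor of $g_1=\gcd(a_2,\ldots,a_k)$ and $g_2=\gcd(a_1,a_3,\ldots,a_k)$ must divide $\gcd(a_1,\ldots,a_k)=1$. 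Inside $\alpha$, Theorem~\ref{t:atlantis} yields $\mathcal L_\infty(x,i)\cap\alpha=(g_i\ZZ+a_i'x)\cap\alpha$ for $i=1,2$, so that $\mathcal L_\infty(x)\cap\alpha\supseteq\bigl((g_1\ZZ+a_1'x)\cup(g_2\ZZ+a_2'x)\bigr)\cap\alpha$; inside $\beta$, Lemma~\ref{l:mininftybounds}(b) rules out contributions from $\mathcal L_\infty(x,i)$ for $i\ge 2$, giving $\mathcal L_\infty(x)\cap\beta=(g_1\ZZ+a_1'x)\cap\beta$.

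For the forward inclusion $[1,\min(g_1,g_2)]\cup\{g_1\}\subseteq\Delta_\infty(x)$, I invoke the combinatorial fact that when $\gcd(g_1,g_2)=1$, the union $(g_1\ZZ+r_1)\cup(g_2\ZZ+r_2)$ is $g_1g_2$-periodic and realizes every gap in $[1,\min(g_1,g_2)]$ within a single period: as the position varies over $g_1\ZZ+r_1$, the distance $\delta$ to the next $g_2\ZZ+r_2$ element cycles through $\{1,\ldots,g_2\}$ by the Chinese Remainder Theorem, and the gap at that position equals $\min(g_1,\delta)$. For $x\gg 0$, the portion of $\alpha$ lying strictly above $\tfrac{1}{a_3}x$ (or all of $\alpha$ when $k=2$) has length exceeding $g_1g_2$ and lies in the range where only $\mathcal L_\infty(x,1)$ and $\mathcal L_\infty(x,2)$ contribute by Lemma~\ref{l:mininftybounds}(b), so $\mathcal L_\infty(x)$ agrees there with the union above and realizes every gap in $[1,\min(g_1,g_2)]$. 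The gap $g_1$ is realized by any two consecutive elements of $(g_1\ZZ+a_1'x)\cap\beta$, which exist once the length of $\beta$ exceeds $g_1$.

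For the second statement, suppose $\ell<\ell'$ are consecutive in $\mathcal L_\infty(x)$ with $\ell'-\ell\notin[1,\min(g_1,g_2)]\cup\{g_1\}$, and assume for contradiction that neither of $\ell,\ell'$ lies in an edge region, so $\ell,\ell'\in\alpha\cup\beta$. The cases of both in $\alpha$ and both in $\beta$ produce immediate contradictions from the bulk descriptions (yielding $\ell'-\ell\le\min(g_1,g_2)$ and $\ell'-\ell=g_1$ respectively). The remaining case $\ell\in\alpha$, $\ell'\in\beta$ is the key step and the main obstacle: letting $\ell^+$ be the smallest element of $g_1\ZZ+a_1'x$ strictly above $\ell$, Theorem~\ref{t:atlantis} places $\ell^+\le\ell+g_1$ into $\mathcal L_\infty(x,1)\subseteq\mathcal L_\infty(x)$ for $x$ large, so $\ell'\le\ell^+\le\ell+g_1$ by consecutiveness; a sub-case analysis on the location of $\ell^+$ relative to the region decomposition, together with a parallel analysis of the next $g_2$-AP element above $\ell$ when $\ell\notin g_1\ZZ+a_1'x$ (again placed into $\mathcal L_\infty(x,2)\subseteq\mathcal L_\infty(x)$ by Theorem~\ref{t:atlantis}), either produces an intermediate element of $\mathcal L_\infty(x)$ between $\ell$ and $\ell'$ contradicting consecutiveness, or forces $\ell'-\ell$ into $[1,\min(g_1,g_2)]\cup\{g_1\}$ contradicting the hypothesis. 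The careful bookkeeping of positions of these ``next AP'' elements relative to the boundary $\tfrac{1}{a_2}x-B_2$ of $\alpha$, and which of the two APs $\ell$ itself belongs to, is the technical heart of the argument.
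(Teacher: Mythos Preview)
Your proposal is correct and follows essentially the same approach as the paper's own proof. Both arguments decompose the range $[\tfrac{1}{A}x,\tfrac{1}{a_1}x]$ into the three edge regions together with the two ``bulk'' intervals, identify $\mathcal L_\infty(x)$ in the bulk via Theorem~\ref{t:atlantis} and Lemma~\ref{l:mininftybounds}(b), and invoke the Chinese Remainder Theorem (using $\gcd(g_1,g_2)=1$) to extract every gap in $[1,\min(g_1,g_2)]$ from the union of the two arithmetic progressions in the window $(\tfrac{1}{a_3}x,\tfrac{1}{a_2}x-B_2)$, while the gap $g_1$ comes from the pure $g_1$-progression in $\beta$.

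The only real difference is one of emphasis: the paper dispatches the ``moreover'' clause in a single sentence, essentially asserting that outside the edge regions the length set is a union of arithmetic progressions with the stated step sizes, whereas you explicitly set up the case split $\ell,\ell'\in\alpha$; $\ell,\ell'\in\beta$; $\ell\in\alpha,\ell'\in\beta$ and flag the last case as the delicate one. Your observation that the $g_1$-progression persists through region~(ii) (so $\ell'\le\ell+g_1$ always), combined with the fact that $\ell'\in\beta$ forces $\ell'\in g_1\ZZ+a_1'x$, is exactly what makes this case go through, and it is implicit in the paper's one-line treatment. Your writeup is thus a more careful unpacking of the same argument rather than a different route.
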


\begin{proof}
First, if $x$ is large enough that $(\tfrac{1}{a_1}x - B_1) - \tfrac{1}{a_2}x > 3g_1$, then by Lemma~\ref{l:mininftybounds}(b), 
$$\mathcal L_\infty(x) \cap (\tfrac{1}{a_2}x, \tfrac{1}{a_1}x - B_1) = \mathcal L_\infty(x,1) \cap (\tfrac{1}{a_2}x, \tfrac{1}{a_1}x - B_1)$$
contains at least 2 lengths, and any two consecutive lengths therein must have difference $g_1 \in \Delta_\infty(x)$ by Theorem~\ref{t:atlantis}.  

Analogously, if $x$ is large enough that $(\tfrac{1}{a_2}x - B_2) - \tfrac{1}{a_3}x > 2g_1g_2$, then by Lemma~\ref{l:mininftybounds}(b) and Theorem~\ref{t:atlantis},
\begin{align*}
\mathcal L_\infty(x) \cap (\tfrac{1}{a_3}x, \tfrac{1}{a_2}x - B_2)
&= \big( \mathcal L_\infty(x,1) \cup \mathcal L_\infty(x,2) \big) \cap (\tfrac{1}{a_3}x, \tfrac{1}{a_2}x - B_2)
\\
&= \big( (g_1\ZZ + a_1' x) \cup (g_2\ZZ + a_2' x) \big) \cap (\tfrac{1}{a_3}x, \tfrac{1}{a_2}x - B_2),
\end{align*}
within which successive elements achieve each difference in $[1, \min(g_1,g_2)]$ by the Chinese Remainder Theorem since $\gcd(g_1,g_2) = \gcd(a_1, \ldots, a_k) = 1$.  

For the final claim, by Theorem~\ref{t:atlantis}, aside from the three claimed intervals, the only subinterval of $[\tfrac{1}{A}x, \tfrac{1}{a_1}x]$ not containing an arithmetic sequences of step size $\min(g_1, g_2)$ is $[\tfrac{1}{a_2}x, \tfrac{1}{a_1}x - B_1]$, whose lengths form an arithmetic sequence of step size $g_1$.  
\end{proof}

\begin{thm}\label{t:inftydeltaperiodic}
For all $x \gg 0$, we have $\Delta_\infty(x + p) = \Delta_\infty(x)$ for $p = \lcm(a_1, g_1a_2, A)$.  
\end{thm}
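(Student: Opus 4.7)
The plan is to reduce the claim, via Proposition~\ref{p:inftydelta}, to showing that certain bounded windows of $\mathcal L_\infty(x)$ near $\tfrac{1}{A}x$, $\tfrac{1}{a_2}x$, and $\tfrac{1}{a_1}x$ each translate rigidly when $x$ is replaced by $x + p$.  Proposition~\ref{p:inftydelta} asserts that for $x \gg 0$ every element of $\Delta_\infty(x)$ not already in $[1,\min(g_1,g_2)] \cup \{g_1\}$ comes from a consecutive pair $\ell < \ell'$ in $\mathcal L_\infty(x)$ with at least one of $\ell,\ell'$ in interval (i), (ii), or (iii).  Since $[1,\min(g_1,g_2)] \cup \{g_1\}$ lies in both $\Delta_\infty(x)$ and $\Delta_\infty(x+p)$, the task reduces to showing that, for each of the three regions, the restriction of $\mathcal L_\infty(x)$ to a suitable neighborhood of that region is a rigid translate of the corresponding restriction of $\mathcal L_\infty(x+p)$, since rigid translation preserves the set of consecutive differences landing in the window.

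The bottom and top windows are handled by the two halves of Theorem~\ref{t:periodicgaps}.  For region (i), fix a buffer $B' > a_k$ and iterate the second equality of Theorem~\ref{t:periodicgaps} exactly $p/A$ times (valid since $A \mid p$) to obtain $\mathcal L_\infty(x+p) \cap [0,\tfrac{1}{A}(x+p) + B'] = \tfrac{p}{A} + \mathcal L_\infty(x) \cap [0,\tfrac{1}{A}x + B']$.  For region (iii), Lemma~\ref{l:mininftybounds}(b) gives $L_\infty(x,j) \le \tfrac{1}{a_j}x$ for $j \ge 2$, which lies well below $\tfrac{1}{a_1}x$ once $x$ is large, so a small enough neighborhood of $\tfrac{1}{a_1}x$ meets only $\mathcal L_\infty(x,1)$; iterating Theorem~\ref{t:periodicgaps} with $i = 1$ a total of $p/a_1$ times (valid since $a_1 \mid p$) shows this neighborhood translates rigidly by $p/a_1$.

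The main obstacle is region (ii), where near $\tfrac{1}{a_2}x$ the set $\mathcal L_\infty(x)$ is (for $x \gg 0$, again by Lemma~\ref{l:mininftybounds}(b)) the union of the irregular top of $\mathcal L_\infty(x,2)$ with the background arithmetic progression $(g_1\ZZ + a_1'x) \cap (\text{neighborhood})$ coming from $\mathcal L_\infty(x,1)$ via Theorem~\ref{t:atlantis}.  Iterating Theorem~\ref{t:periodicgaps} with $i=2$ a total of $p/a_2$ times (valid since $a_2 \mid p$) shifts the $\mathcal L_\infty(x,2)$ contribution rigidly by $p/a_2$.  For the $\mathcal L_\infty(x,1)$ background, the divisibility $g_1 \mid p$ (which follows from $g_1 \mid g_1 a_2 \mid p$) gives $g_1\ZZ + a_1'(x+p) = g_1\ZZ + a_1'x$ as sets, and the divisibility $g_1 \mid p/a_2$ (also from $g_1 a_2 \mid p$) forces this set to be invariant under translation by $p/a_2$; together these say that the background intersected with the shifted neighborhood equals the original background intersected with the original neighborhood, translated by $p/a_2$.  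So both ingredients of region (ii) translate by exactly $p/a_2$, preserving the entire pattern --- including the bump of size up to $B_2$ and every interaction between the two residue classes --- and hence every consecutive difference landing in this window.  Combining the three regions gives $\Delta_\infty(x+p) = \Delta_\infty(x)$; the factor $g_1 a_2$ in $p$ is precisely what reconciles the natural $a_2$-period of the top of $\mathcal L_\infty(x,2)$ with the natural $g_1$-period of the $\mathcal L_\infty(x,1)$ background.
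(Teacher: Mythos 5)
Your proposal is correct and follows essentially the same route as the paper's proof: both decompose the problem via Proposition~\ref{p:inftydelta} into the three boundary regions, handle the $\tfrac{1}{A}x$ and $\tfrac{1}{a_1}x$ windows with the two halves of Theorem~\ref{t:periodicgaps}, and handle the $\tfrac{1}{a_2}x$ window by combining Theorem~\ref{t:periodicgaps} for $\mathcal L_\infty(x,2)$ with the divisibility $g_1 \mid p$ and $g_1 \mid \tfrac{p}{a_2}$ to translate the background arithmetic progression from $\mathcal L_\infty(x,1)$. Your explicit iteration of Theorem~\ref{t:periodicgaps} and your emphasis that $g_1 a_2 \mid p$ is exactly what reconciles the $a_2$-periodicity of region (ii) with the $g_1$-periodicity of the background are both slightly more transparent than the paper's phrasing, but the ideas are identical.
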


\begin{proof}
We begin by considering the intervals~(i), (ii), and~(iii) in Proposition~\ref{p:inftydelta}.  Let
\begin{align*}
R_1(x) &= [\tfrac{1}{A}x, \tfrac{1}{A}x + (a_k + g_1)] \cap \mathcal L_\infty(x),
\\
R_2(x) &= [\tfrac{1}{a_2}x - (B_2 + g_1), \tfrac{1}{a_2}x + g_1] \cap \mathcal L_\infty(x),
\\
R_3(x) &= [\tfrac{1}{a_1}x - (B_1 + g_1), \tfrac{1}{a_1}x] \cap \mathcal L_\infty(x).
\end{align*}
Applying Theorem~\ref{t:periodicgaps} with $B' = a_k + g_1$ gives $R_3(x + p) = R_3(x) + \tfrac{1}{A}p$, and letting $B = B_1 + g_1$, we have
$$
R_1(x) = [\tfrac{1}{a_1}x - B, \tfrac{1}{a_1}x] \cap \mathcal L_\infty(x,1)
$$
by Lemma~\ref{l:mininftybounds}(b), so $R_1(x + p) = R_1(x) + \tfrac{1}{a_1}p$.  
Additionally, by Theorem~\ref{t:atlantis}, 
$$
R_2(x) = \big( [\tfrac{1}{a_2}x - B, \tfrac{1}{a_2}x] \cap \mathcal L_\infty(x,2) \big) \cup \big( [\tfrac{1}{a_2}x - B, \tfrac{1}{a_2}x + g_1] \cap (g_1\ZZ + a_1' x) \big)
$$
for $B = B_2 + g_1$.  Clearly $g_1\ZZ + a_1' (x + p) = g_1\ZZ + a_1' x$, and since $g_1 \mid \tfrac{1}{a_2}p$, 
$$\big( g_1\ZZ + a_1' (x + p) \big) + \tfrac{1}{a_2}p = g_1\ZZ + a_1' x$$
as well.  As such, $R_2(x + p) = R_2(x) + \tfrac{1}{a_2}p$ once again by Theorem~\ref{t:periodicgaps}.  

Lastly, by Proposition~\ref{p:inftydelta} any successive lengths in $\mathcal L_\infty(x)$ or $\mathcal L_\infty(x + p)$ not residing in one of the above intervals must have difference in $[1, \min(g_1,g_2)] \cup\{g_1\}$, which is a subset of both $\mathcal L_\infty(x)$ and $\mathcal L_\infty(x + p)$ by Proposition~\ref{p:inftydelta}.  
\end{proof}

\begin{cor}\label{c:machupicchu}
Fix $B > 0$.  For all $x \gg 0$, we have 
$$\ell \in \mathcal L_\infty(x) \cap [\tfrac{1}{a_1}x - B, \tfrac{1}{a_1}x]
\qquad \text{if and only if} \qquad
x - \ell a_1 \in g_1S_1 \cap [0, a_1B].$$
In particular, $\Delta(g_1S_1 \cap (a_1\ZZ + j)) \subseteq \Delta_\infty(S)$ for each $j$.  
\end{cor}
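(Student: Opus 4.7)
The plan is to prove the biconditional via the order-reversing bijection $\ell \leftrightarrow y = x - \ell a_1$ between $\mathcal L_\infty(x) \cap [\tfrac{1}{a_1}x - B, \tfrac{1}{a_1}x]$ and $g_1 S_1 \cap [0, a_1 B]$, and then transfer gaps through this bijection to deduce the consequence for $\Delta_\infty(S)$.

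For the forward direction of the equivalence, I would leverage Lemma~\ref{l:mininftybounds}(b) to argue that $L_\infty(x,i) \le \tfrac{1}{a_i}x \le \tfrac{1}{a_2}x$ for every $i \ne 1$, so once $x$ is large enough to make $\tfrac{1}{a_1}x - B > \tfrac{1}{a_2}x$, any $\ell$ in the stated range is forced to lie in $\mathcal L_\infty(x,1)$. Extracting a factorization $z \in \mathsf Z(x,1)$ with $z_1 = \ell = \max z$ then gives $x - \ell a_1 = z_2 a_2 + \cdots + z_k a_k \in g_1 S_1$, and the inequality $\ell \ge \tfrac{1}{a_1}x - B$ rewrites directly as $x - \ell a_1 \le a_1 B$.

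For the reverse direction, suppose $y = x - \ell a_1 \in g_1 S_1 \cap [0, a_1 B]$ and write $y = z_2 a_2 + \cdots + z_k a_k$ with each $z_j \ge 0$. Since $z_j a_j \le y \le a_1 B$ forces $z_j \le a_1 B / a_2$, taking $x$ large enough that $\tfrac{1}{a_1}x - B > a_1 B / a_2$ makes $\ell$ strictly larger than every $z_j$ with $j \ne 1$, so $(\ell, z_2, \ldots, z_k)$ is a factorization of $x$ with $\ell_\infty = \ell$, hence $\ell \in \mathcal L_\infty(x)$.

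For the ``in particular'' consequence, I would fix consecutive elements $y_0 < y_1$ of $g_1 S_1 \cap (a_1\ZZ + j)$ and pick $x \gg 0$ with $x \equiv j \pmod{a_1}$ and $a_1 B \ge y_1$. The first part yields corresponding lengths $\ell_i = (x - y_i)/a_1 \in \mathcal L_\infty(x)$ with $\ell_0 > \ell_1$, and these must be consecutive in $\mathcal L_\infty(x)$: any intermediate $\ell' \in (\ell_1, \ell_0) \cap \mathcal L_\infty(x)$ automatically satisfies $\ell' \ge \ell_1 \ge \tfrac{1}{a_1}x - B$, lies in the same window, and would produce $y' = x - \ell' a_1 \in g_1 S_1 \cap (a_1\ZZ + j)$ strictly between $y_0$ and $y_1$, contradicting consecutivity. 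The resulting gap $\ell_0 - \ell_1$ then witnesses the claimed inclusion in $\Delta_\infty(S)$.

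The main obstacle I anticipate is consolidating the two directions of the biconditional under a single ``for all $x \gg 0$'' clause, since the forward and reverse directions produce different linear-in-$B$ thresholds on $x$ that must both be absorbed into the implicit bound. A secondary subtlety in the final deduction is the ``consecutive-to-consecutive'' step, where the order-reversing nature of the bijection together with the automatic containment of any intermediate length in the same window is what makes the transfer of gaps through the bijection clean.
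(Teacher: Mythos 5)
Your biconditional argument follows essentially the same route as the paper's, though you make the reverse direction more self-contained: the paper traces back through the proof of Theorem~\ref{t:atlantis} (invoking the bound from Lemma~\ref{l:mininftybounds}(a) to show any $\ell > \tfrac{1}{A}x + a_k$ admits a suitable completion of the factorization), whereas you observe directly that since $y = x-\ell a_1$ is bounded by $a_1B$, \emph{every} factorization of $y$ in $g_1 S_1$ has coordinates at most $a_1B/a_2$, so for $x \gg 0$ any such completion has $\infty$-length $\ell$. That is a small but genuine simplification, avoiding the appeal to the internal mechanics of Theorem~\ref{t:atlantis}.

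Your ``in particular'' argument is correct and is laid out in far more detail than the paper's terse ``follows upon unraveling definitions.'' However, carrying your calculation to its conclusion reveals a discrepancy with the corollary as printed: in the bijection $\ell \leftrightarrow y = x - \ell a_1$, a gap $y_1 - y_0$ between consecutive elements of $g_1 S_1 \cap (a_1\ZZ + j)$ corresponds to the $\infty$-length gap $\ell_0 - \ell_1 = (y_1 - y_0)/a_1$, not $y_1 - y_0$ itself. Your argument therefore proves $\tfrac{1}{a_1}\Delta\bigl(g_1 S_1 \cap (a_1\ZZ + j)\bigr) \subseteq \Delta_\infty(S)$, and the literal statement in the paper appears to be missing this $\tfrac{1}{a_1}$ factor. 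One can see the literal statement cannot hold as written: the elements of $g_1 S_1 \cap (a_1\ZZ + j)$ are all congruent mod $a_1g_1$ (using $\gcd(a_1,g_1)=1$), so the gaps there are multiples of $a_1 g_1$; yet for, say, $S = \langle 2,3\rangle$ one has $a_1 g_1 = 6$ while Proposition~\ref{p:inftydelta} and the surrounding analysis cap $\Delta_\infty(S)$ at $g_1 = 3$. So you have both reproduced the intended proof and, as a byproduct, caught a typo in the stated conclusion. Your worry about consolidating thresholds under a single ``$x \gg 0$'' is a non-issue, since $B$ is fixed before $x$ and all your lower bounds on $x$ are eventually dominated.
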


\begin{proof}
As in the proof of Theorem~\ref{t:atlantis}, $\ell \in \mathcal L_\infty(x,1)$ if and only if (i) $x - \ell a_1 \in g_1S_1$ and (ii) there exists a factorization of $x - \ell a_1$ in $g_1S_1$ with $\infty$-length at most $\ell$.  
Tracing through the proof of Theorem~\ref{t:atlantis}, so long as condition~(i) holds, condition~(ii) holds whenever $\ell > \tfrac{1}{A}x + a_k$, which is certainly the case if $\ell > \tfrac{1}{a_1}x - B$ for $x \gg 0$.  Moreover, 
$$x - \ell a_1 < x - (\tfrac{1}{a_1}x - B) a_1 = a_1B.$$
As such, Lemma~\ref{l:mininftybounds}(b) implies the frst claim, and the second claim then follows upon unraveling definitions.  
\end{proof}

\section{Some families of \texorpdfstring{$\infty$}{infinity}-delta sets}
\label{sec:inftydeltas}

In this section, we examine the set $\Delta_\infty(S)$ for several families of numerical semigroups.  We characterize the $\infty$-delta set for supersymmetric numerical semigroups~\cite{supersymmetric}, and numerical semigroups whose generators form an arithmetic sequence~\cite{omidalirahmati} or a geometric sequence~\cite{vadimgeomseq,tripathigeomseq}.  We also demonstrate that $\Delta_\infty(S)$ can be an arbitrarily long interval (Theorem~\ref{t:delinfarith}) and have arbitrarily long gaps (Theorem~\ref{t:delinfgaps}).  

Many of our arguments in this section and the next utilize trades, presentations, and Betti elements.  We briefly review the relevant concepts here, though the reader is encouraged to see~\cite[Chapter~5]{numericalappl} and \cite{fingenmon} for a thorough introduction.  

Define an equivalence relation $\til$ on $\ZZ_{\ge 0}^k$ that sets $z \sim z'$ whenever $z, z' \in \mathsf Z(x)$ are factorizations of the same element $x \in S$.  We call each relation $z \sim z'$ between facatorizations of disjoint support a \emph{trade} of $S$, and sometimes identify the difference $z - z' \in \ZZ^k$ with the trade $z \sim z'$.  
A \emph{presentation} of $S$ is a collection $\rho$ of trades with the property that for any $x \in S$ and any $z, z' \in S$, there exists a chain of factorizations 
$$z = y_1 \sim y_2 \sim \cdots \sim y_r = z'$$
wherein $y_i - y_{i-1} \in \rho$ or $y_{i-1} - y_i \in \rho$ for each pair of sequential factorizations $y_{i-1}$ and $y_i$.  
A presentation is \emph{minimal} if it is minimal with respect to containment among all presentations for $S$.  It is known that any two minimal presentations $\rho$ and $\rho'$ for $S$ have the same number of trades, and in fact the set 
$$
\Betti(S) = \{z_1a_1 + \cdots + z_ka_k : z - z' \in \rho \}
$$
of \emph{Betti elements} is independent of the choice of minimal presentation $\rho$.  

\begin{thm}\label{t:delinffamilies}
Let $S = \<a_1, \ldots, a_k\>$ with $a_1 < \cdots < a_k$.  
\begin{enumerate}[(a)]
\item 
If $a < b$ are coprime and each $a_i = a^{k-1-i} b^{i-1}$, then $\Delta_\infty(S) = \{1, 2, \ldots, b\}$.  

\item 
If $p_1, \ldots, p_k \in \ZZ_{\ge 1}$ are pairwise coprime with $p_1 > \cdots > p_k$, $T = p_1 \cdots p_k$, and each $a_i = \tfrac{1}{p_i}T$, then $\Delta_\infty(S) = \{1, 2, \ldots, p_1\}$.  

\end{enumerate}
\end{thm}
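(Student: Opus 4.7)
My approach for both parts follows the same template: first bound $\Delta_\infty(S)$ from above by analyzing how single trades move through factorizations, then bound from below by combining Proposition~\ref{p:inftydelta} with explicit families of witness elements.

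For the upper bounds, I will work from the minimal presentations.  In part~(a), the minimal trades are $b \cdot a_i \sim a \cdot a_{i+1}$ for $i = 1, \ldots, k-1$, since $b \cdot a^{k-i} b^{i-1} = a \cdot a^{k-i-1} b^{i}$.  Each such trade modifies two adjacent coordinates of a factorization by $\pm b$ and $\mp a$, so $\ell_\infty$ varies by at most $\max(a,b) = b$ under a single trade; since any two factorizations of the same element are connected by a chain of trades, consecutive elements of $\mathcal L_\infty(x)$ differ by at most $b$, forcing $\Delta_\infty(S) \subseteq \{1, \ldots, b\}$.  In part~(b), every Betti trade has the form $p_i a_i \sim p_j a_j$, since $p_i a_i = T$ for all $i$.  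A clean observation is that each such trade preserves the tuple of residues $(z_1 \bmod p_1, \ldots, z_k \bmod p_k)$, so this tuple is determined by $x$; writing $z_i = \bar z_i + p_i y_i$ identifies the factorizations of $x$ bijectively with nonnegative tuples $(y_1, \ldots, y_k)$ satisfying $y_1 + \cdots + y_k = n$ for a fixed $n = n(x)$.  The elementary moves $y \mapsto y + e_i - e_j$ connect this simplex of tuples and change $\ell_\infty$ by at most $p_1$, giving $\Delta_\infty(S) \subseteq \{1, \ldots, p_1\}$ by the same connectedness argument.

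For the lower bounds, Proposition~\ref{p:inftydelta} immediately produces $\{1\} \cup \{b\} \subseteq \Delta_\infty(S)$ in part~(a) for $k \geq 3$ (and $\{1, \ldots, a\} \cup \{b\}$ when $k = 2$), and $\{1, \ldots, p_2\} \cup \{p_1\} \subseteq \Delta_\infty(S)$ in part~(b).  The remaining differences will be realized by explicit constructions.  In part~(b), the parameterization by compositions lets me take $x = c a_1 + T$ with a single nonzero residue $c \in \{0, \ldots, p_1 - 1\}$: the factorization $(c + p_1, 0, \ldots, 0)$ realizes $\ell_\infty = c + p_1$, while swapping the $y$-unit from coordinate $1$ to coordinate $j \geq 2$ drops $\ell_\infty$ to $\max(c, p_j)$; tuning $c$ and $j$ (and if needed using $x = c_1 a_1 + c_2 a_2 + T$ to handle the case $p_1 - p_2 > p_2$) realizes every $d \in \{p_2 + 1, \ldots, p_1\}$.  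For part~(a), I plan an analogous ``top gap'' construction, using elements of the form $n a_1 a_k + r$ for a carefully chosen residue $r$ so that the transition in $\mathcal L_\infty(x)$ between $z_1$-dominant and $z_k$-dominant factorizations produces each $d \in \{2, \ldots, b-1\}$ (or $\{a+1, \ldots, b-1\}$ when $k = 2$).

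The upper bounds are clean once the trade structure is in hand.  The technical heart of the proof is the lower bound: for each $d$ in the claimed range I must not only exhibit $x$ together with two factorizations whose $\ell_\infty$-values differ by $d$, but also verify that no third factorization of $x$ has $\ell_\infty$ strictly between them.  In part~(b), the bijection with compositions $y_1 + \cdots + y_k = n$ makes this verification a direct enumeration.  Part~(a), especially for $k \geq 3$, is less uniformly structured, and controlling the transition regime of $\mathcal L_\infty(x)$ to rule out intermediate lengths is where I expect the proof to be most delicate.
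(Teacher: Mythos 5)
The overall strategy differs genuinely from the paper's: for the lower bound you invoke Proposition~\ref{p:inftydelta} to obtain $[1, \min(g_1,g_2)] \cup \{g_1\}$ for free and then hunt for witnesses covering the remaining interval, whereas the paper instead directly exhibits, for each desired difference, a small element of $S$ with precisely two factorizations.  Your upper-bound argument (each Betti trade perturbs $\ell_\infty$ by at most $b$, resp.\ $p_1$, so consecutive lengths can differ by at most this via a trade chain) is essentially the paper's, and the composition parameterization in part~(b) is a nice way to package it.

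There are, however, genuine gaps in the lower-bound constructions.  In part~(a) you do not actually give the witness: you say you ``plan an analogous top-gap construction'' and that ``controlling the transition regime \ldots is where I expect the proof to be most delicate.''  That is an acknowledgment that the hardest step is missing, not a proof.  In part~(b), the witness $x = c a_1 + T$ (one nonzero residue, $n = 1$) has $\mathcal L_\infty(x) = \{c + p_1\} \cup \{\max(c, p_j) : j \ge 2\}$, so the element immediately below the maximum is $\max(c,p_2)$ and the top consecutive difference is $(c + p_1) - \max(c, p_2)$, which realizes only $\{p_1 - p_2, \ldots, p_1\}$.  The phrase ``tuning $c$ and $j$'' suggests using $\max(c,p_j)$ for $j > 2$, but those are not consecutive with $c + p_1$ in $\mathcal L_\infty(x)$ and so contribute nothing to $\Delta_\infty(x)$ at the top.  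Your fallback $x = c_1 a_1 + c_2 a_2 + T$ has top gap $(c_1 + p_1) - (c_2 + p_2)$ with $c_2 \le p_2 - 1$, which reaches down only to $p_1 - 2p_2 + 1$, leaving every $d \in \{p_2 + 1, \ldots, p_1 - 2p_2\}$ uncovered whenever $p_1 > 3p_2$ (which the hypotheses allow, e.g.\ $p_1 = 101, p_2 = 3, p_3 = 2$).  Covering that range requires elements with $n \ge 2$ in your composition parameterization, where the needed differences occur \emph{below} the top pair of lengths, and the ``direct enumeration'' you promise would have to engage with this; as written the construction does not close.
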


\begin{proof}
For part~(a), the trades $be_i \sim ae_{i+1}$ for $i = 1, \ldots, k-1$ form a minimal presentation for $S$ by \cite[Theorem~8]{compseqs}, so $\max \Delta_\infty(S) \le b$.  
Now, if $1 \le c \le a$, we have
$$\mathsf Z((b + a - c)a_1) = \{(b + a - c)e_1, (a - c)e_1 + ae_2\}$$
so $c \in \Delta_\infty(S)$.  Moreover, if $a < c \le b$, then 
$$\mathsf Z(ca_2) = \{(be_1 + (c-a)e_2, ce_2\},$$
so again $c \in \Delta_\infty(S)$.  Thus, $\Delta_\infty(S) = \{1, 2, \ldots, b\}$.  

For part~(b), the trades $p_{i+1}e_i \sim p_ie_{i+1}$ for $i = 1, \ldots, k-1$ form a minimal presentation for $S$ by \cite{supersymmetric}, so $\max \Delta_\infty(S) \le p_1$.  Using a similar argument to part~(a), we have $\Delta_\infty((p_1 + p_2 - c)a_1) = \{c\}$ whenever $1 \le c \le a$ and $\Delta_\infty(ca_2) = \{c\}$ whenever $p_2 < c \le p_1$.  
Thus, $\Delta_\infty(S) = \{1, 2, \ldots, p_1\}$.  
\end{proof}

\begin{thm}\label{t:delinfarith}
Let $S = \<a, a + d, \ldots, a + kd\>$ with $2 \le k < a$ and $\gcd(a,d) = 1$, and write $a - 1 = qk + r$ for $q, r \in \ZZ_{\ge 0}$ with $0 \le r < k$.  Then $\Delta_\infty(S) = \{1, 2, \ldots, q + d + 1\}$.  
\end{thm}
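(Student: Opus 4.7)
The plan is to prove the two set containments $\Delta_\infty(S) \subseteq \{1, \ldots, q+d+1\}$ and $\Delta_\infty(S) \supseteq \{1, \ldots, q+d+1\}$ separately.

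For the upper bound, I would invoke the minimal presentation of $S$, which for arithmetic numerical semigroups with $\gcd(a,d) = 1$ and $k < a$ is known~\cite{omidalirahmati} to consist of the ``small'' binomials $e_i + e_j \sim e_{i-1} + e_{j+1}$ (for $2 \le i \le j \le k$) together with the single ``big'' trade $(q+d+1)\,e_1 \sim e_{r+2} + q\,e_{k+1}$, verifiable by the identity $(q+d+1)\,a = (a+(r+1)d) + q(a+kd)$ which follows from $qk+r+1 = a$. Each small trade modifies coordinates of a factorization by at most $1$ and hence changes $\ell_\infty$ by at most $1$, while the big trade modifies each coordinate by at most $q+d+1$. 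Connecting any two factorizations $z, z' \in \mathsf{Z}(x)$ by a chain of trades from the minimal presentation, and considering the first step at which $\ell_\infty$ strictly exceeds a given $\ell \in \mathcal L_\infty(x)$, yields an intermediate $\ell_\infty$-value in $(\ell, \ell + (q+d+1)]$. Hence if $\ell < \ell'$ were consecutive in $\mathcal L_\infty(x)$ with $\ell' - \ell > q+d+1$, this intermediate would lie strictly between $\ell$ and $\ell'$, contradicting consecutivity.

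For the lower bound, I would apply Corollary~\ref{c:machupicchu}. Since $g_1 = \gcd(a+d, \ldots, a+kd) = 1$, each residue $j \pmod{a}$ yields $\Delta(S_1 \cap (a\ZZ + j))/a \subseteq \Delta_\infty(S)$, where $S_1 = \langle a+d, \ldots, a+kd\rangle$. I would parameterize elements of $S_1$ as $sa + md$ with $s = \sum t_i$ and $m = \sum i\,t_i$ subject to $\lceil m/k\rceil \le s \le m$, noting that the residue modulo $a$ is determined by $m$ via $v \equiv md \pmod{a}$. Within a fixed-$m$ block the values differ by $a$, realizing $1 \in \Delta_\infty(S)$; between the $m$-block and the $(m+a)$-block in a common residue class, the transition gap divided by $a$ equals
\[
g(m) \;:=\; \Bigl\lceil \tfrac{m+a}{k}\Bigr\rceil - m + d.
\]
A quick calculation gives $g(1) \ge q+d$ and $g(m+1) - g(m) \in \{-1, 0\}$; since $g(m) \to -\infty$ (valid because $k \ge 2$), $g$ attains every integer value in $\{1, 2, \ldots, q+d\}$. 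Combining with the ``start gap'' from $j = 0$---where the least positive multiple of $a$ in $S_1$ is $a(q+d+1)$, contributing $q+d+1 \in \Delta_\infty(S)$---realizes every element of $\{1, 2, \ldots, q+d+1\}$.

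The main obstacle will be verifying that the transition gaps in $S_1 \cap (a\ZZ + j)$ correspond to consecutive elements of $\mathcal L_\infty(x)$, not merely to consecutive elements of $\mathcal L_\infty(x, 1)$. Taking $x = j + aN$ for $N$ sufficiently large handles this: for $i \ne 1$, $\mathcal L_\infty(x, i)$ is bounded above by $x/a_i \le x/(a+d)$, which lies strictly below the top portion of $\mathcal L_\infty(x, 1)$ in which the transition gaps $g(m)$ appear, so no interference arises. A secondary technicality is careful handling of the boundary case $r = k-1$, where $e_{r+2} = e_{k+1}$ causes the big trade to become $(q+d+1)\,e_1 \sim (q+1)\,e_{k+1}$ and correspondingly $g(1) = q+d+1$; the argument goes through uniformly in this case as well.
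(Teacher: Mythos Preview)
Your proposal is correct and takes a genuinely different route from the paper's---and is in one respect more complete. The paper's proof establishes only the containment $\{1,\ldots,q+d+1\}\subseteq\Delta_\infty(S)$: for each $G$ in this range it exhibits an explicit element (namely $x=(a+d-G)(a+d)$ when $1\le G\le d$ and $x=(a+G)a$ when $d\le G\le q+d+1$) and analyzes its factorizations directly via the parametrization $x=\ell a+bd$, never arguing the reverse inclusion $\max\Delta_\infty(S)\le q+d+1$. Your presentation-based chain argument supplies exactly this missing upper bound. For the lower bound you also diverge: rather than building witnesses by hand, you invoke Corollary~\ref{c:machupicchu} and read off the required gap sizes from the arithmetic of $S_1\cap(a\ZZ+j)$ via the function $g(m)=\lceil(m+a)/k\rceil-m+d$. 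The paper's route is self-contained and yields small concrete witnesses; your route is more structural, ties the result back to the machinery of Section~\ref{sec:structurethm}, and closes the gap in the paper's argument.
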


\begin{proof}
In what follows, write $a_i = a + ik$ for $i \in [0,k]$, and for $x \in S$, write factorizations $z \in \mathsf Z(x)$ as $z = (z_0, \ldots, z_k) = z_0e_0 + \cdots + z_ke_k$.  
Before beginning the proof, we recall some facts about arithmetical numerical semigroups; see \cite{setoflengthsets,omidalirahmati}.  Each $x \in S$ has $\ell \in \mathcal L_1(x)$ if and only if 
$$
x = \ell a + bd
\qquad \text{with} \qquad
0 \le b \le k\ell,
$$
as for any factorization $z \in \mathsf Z(x)$ with $\ell_1(z) = \ell$, we can write
$$x = (z_0 + \cdots + z_k)a + (z_1 + 2z_2 + \cdots + kz_k)d$$
with $b = z_1 + 2z_2 + \cdots + kz_k$.  
Moreover,
$$
\ell - \lceil \tfrac{1}{k}b \rceil
\ge z_0
= \ell - (z_1 + \cdots + z_k)
\ge \ell - b,
$$
with equality on the right if $z_2 = \cdots = z_k = 0$.  

We now proceed with the proof.  First, suppose $1 \le G \le d$.  We see
$$
x
= (a+d)a + (d-G)(a + d)
= (a+d-G)(a + d)
$$
are factorizations $z, z' \in \mathsf Z(x)$, respectively, with $\ell_\infty(z) = a+d$ and $\ell_\infty(z') = a+d-G$.  Now, since $\ell_\infty(z') = \ell_1(z') = z_1'$, any factorization $z'' \in \mathsf Z(x)$ with $\ell_\infty(z'') > \ell_\infty(z')$ must have $\ell_1(z'') > \ell_1(z')$.  This means $\ell_1(z'') \ge \ell_1(z)$, and letting $b = z_1'' + 2z_2'' + \cdots + kz_k''$, any such factorization must have 
$$
z_0''
\ge \ell_1(z'') - b
\ge (a + 2d - G) - (d - G)
= a+d
$$
by the first paragraph above.  As such, $G \in \Delta_\infty(x)$.  

Next, suppose $d \le G \le d + q + 1$.  We see
$$
x
= (a+G)a
= (G-d)a + a(a+d)
$$
are factorizations $z, z' \in \mathsf Z(x)$, respectively, with $\ell_\infty(z) = a+G$ and $\ell_\infty(z') = a$.  Fix a factorization $z'' \in \mathsf Z(x)$ with $\ell_\infty(z'') < \ell_\infty(z)$, and let $b = z_1'' + 2z_2'' + \cdots + kz_k''$.  Since $z = (a+G)e_1$, we must have $\ell_1(z'') < \ell_1(z) = a + G$.  As such, $\ell_1(z'') \le a + G - d = \ell_1(z')$ and $b \ge a$, meaning 
$$z_0'' \le \ell_1(z'') - \lceil \tfrac{1}{k}b \rceil \le (a + G - d) - \lceil \tfrac{1}{k}a \rceil \le a + G - d - q - 1 \le a.$$
Additionally, if $z_j'' \ge a$ for some $j \ge 1$, then 
$$\sum_{i \ne j} z_i''a_i = x - z_j'' a_j < x - a a_j \le x - a(a + d) = (G - d)a \le (q + 1)a,$$
and all factorizations of such an element have equal 1-length.  As such, since
$$x = (G - jd)a + a(a + jd),$$
we must have $\ell_1(z'') = \ell_1(z') - jd$, meaning $z''$ coincides with the above factorization.  Thus $\ell_\infty(z'') \le a$, thereby ensuring $G \in \Delta_\infty(x)$.  
\end{proof}

\begin{thm}\label{t:delinfgaps}
Fix $m \ge 3$.  If $S = \<3, 3m + 1, 3m + 2\>$, then 
$$\Delta_\infty(S) = \{1, 2, \ldots , m+1\} \cup \{2m, 2m+1\}.$$
\end{thm}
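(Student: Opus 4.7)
My plan is to prove the two containments separately. For the forward inclusion $\{1, \ldots, m+1\} \cup \{2m, 2m+1\} \subseteq \Delta_\infty(S)$, I will exhibit explicit witnesses. The element $x = 6m+3$ has exactly two factorizations, $(2m+1, 0, 0)$ and $(0, 1, 1)$, giving $\mathcal L_\infty(x) = \{1, 2m+1\}$ and hence $\Delta_\infty(x) = \{2m\}$. The element $x = 9m+6 = 3(3m+2)$ has the four factorizations $(3m+2, 0, 0)$, $(m+1, 1, 1)$, $(1, 3, 0)$, $(0, 0, 3)$, producing $\mathcal L_\infty(x) = \{3, m+1, 3m+2\}$, whence $2m+1 \in \Delta_\infty(x)$. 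For each $d \in \{1, \ldots, m+1\}$, I plan to combine (i) an application of Corollary~\ref{c:machupicchu} to $S_1 = \langle 3m+1, 3m+2\rangle$ (noting $g_1 = 1$), which contributes to $\Delta_\infty(S)$ all the normalized consecutive-difference values of $S_1 \cap (3\ZZ + j)$ for $j \in \{0, 1, 2\}$, yielding $\{1, m-2, m-1, m, m+1\}$; with (ii) additional witnesses supplying the gaps $\{2, \ldots, m-3\}$ (needed only when $m \geq 5$), obtained by computing $\mathcal L_\infty$ for elements of the form $3k(3m+1) + 3k'(3m+2)$ with small $k, k'$, where intermediate differences arise.

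For the reverse inclusion $\Delta_\infty(S) \subseteq \{1, \ldots, m+1\} \cup \{2m, 2m+1\}$, I will parametrize factorizations of $x \in S$ by pairs $(b, c) \in \ZZ_{\ge 0}^2$ with $b + 2c \equiv x \pmod 3$ and $(3m+1)b + (3m+2)c \leq x$, setting $a = (x - (3m+1)b - (3m+2)c)/3$ and $\ell_\infty = \max(a, b, c)$. For large $x$, Theorem~\ref{t:atlantis} (with $g_1 = g_2 = g_3 = 1$) implies each $\mathcal L_\infty(x, i)$ fills a full interval within its main range. By Proposition~\ref{p:inftydelta}, the only potentially large differences in $\Delta_\infty(x)$ occur near the transition thresholds $\tfrac{x}{3m+2}$, $\tfrac{x}{3m+1}$, and $\tfrac{x}{3}$, and a direct computation of each transition yields only differences in the allowed set. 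For small $x$, a finite check via the same parametrization suffices.

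The main obstacle is ruling out differences in the forbidden range $\{m+2, \ldots, 2m-1\}$. The key structural fact is that the sorted sequence of ``reduction values'' $M_{b, c} = ((3m+1)b + (3m+2)c)/3$ begins $0, 2m+1, 3m+1, 3m+2, 4m+2, 5m+2, 5m+3, 6m+2, \ldots$: the only jump exceeding $m+1$ in this sequence is the initial jump of $2m+1$, and all subsequent consecutive differences lie in $\{1, m-2, m-1, m\}$. This explains why $2m+1$ appears as a forced gap in $\mathcal L_\infty(x)$ for generic large $x$, why the gap $2m$ is a boundary phenomenon isolated to $x = 6m+3$ (where the factorization $(0, 1, 1)$ has $\ell_\infty = 1$ rather than the ``expected'' value $a = 0$), and why no differences in $\{m+2, \ldots, 2m-1\}$ can ever occur.
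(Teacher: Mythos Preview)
Your proposal contains genuine gaps in both directions.

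\textbf{Forward inclusion.} Your witnesses for $2m$ and $2m+1$ are correct, and invoking Corollary~\ref{c:machupicchu} legitimately supplies a few of the small values (indeed $\{1,m-2,m-1,m,m+1\}$ once you push the computation far enough). But the promised ``additional witnesses supplying the gaps $\{2,\ldots,m-3\}$'' are never produced; saying they come from ``elements of the form $3k(3m+1)+3k'(3m+2)$ with small $k,k'$, where intermediate differences arise'' is not a proof. The paper handles this cleanly with a single parametrized family: for each $G\in\{1,\ldots,m-1\}$ it takes $x=(G+1)(3m+2)$, shows $(0,0,G+1)$ and $(m+1,1,G-1)$ are factorizations with $\ell_\infty$-values $G+1$ and $m+1$, and proves no $\ell_\infty$-value lies strictly between, yielding $m-G\in\Delta_\infty(S)$.

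\textbf{Reverse inclusion.} This is the more serious problem. The structural results of Section~\ref{sec:structurethm} (Theorem~\ref{t:atlantis}, Proposition~\ref{p:inftydelta}) are asymptotic: they hold only for $x\gg 0$, with thresholds that depend on $S$ and hence on $m$. Your ``finite check for small $x$'' is therefore an $m$-dependent computation that is never carried out, and cannot be replaced by a single uniform verification. Moreover, your $M_{b,c}$ analysis only treats the residue class $x\equiv 0\pmod 3$, only the region where $a=\ell_\infty(z)$, and even there the assertion that the difference sequence after the initial $2m+1$ stays in $\{1,m-2,m-1,m\}$ is stated without proof. The transition region where $\max(b,c)$ overtakes $a$ is precisely where an unwanted gap in $\{m+2,\ldots,2m-1\}$ could in principle appear, and you do not address it.

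The paper avoids all of this by working directly with the minimal presentation (the three trades $(2m+1)e_1\sim e_2+e_3$, $me_1+e_3\sim 2e_2$, $(m+1)e_1+e_2\sim 2e_3$). The key observation is that for any factorization $z=(a,b,c)$ that is not $\ell_\infty$-maximal, either $b\ge 2$ or $c\ge 2$ (unless $b=c=1$). In the first case, iterating the trade $2e_2\sim me_1+e_3$ produces a chain reaching strictly larger $\ell_\infty$ in steps of size at most $m$; in the second, the trade $2e_3\sim(m+1)e_1+e_2$ gives steps of size at most $m+1$. Only from $(a,1,1)$ is one forced to jump by $2m$ or $2m+1$. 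This argument is elementary, uniform in $m$, and handles every $x$ simultaneously without any asymptotic machinery or case-by-case residue analysis.
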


\begin{proof}
Since $S$ has max embedding dimension (see~\cite[Chapter~3]{numerical}):
\begin{enumerate}[(i)]
\item 
the trades 
$$
(2m + 1)e_1 \sim e_2 + e_3,
\qquad
me_1 + e_3 \sim 2e_2,
\qquad \text{and} \qquad
(m + 1)e_1 + e_2 \sim 2e_3
$$
comprise a minimal presentation for $S$;

\item 
for each $x \in S$, the unique factorization $z = (a,b,c) \in \mathsf Z(x)$ with $\ell_\infty(z)$ maximal in $\mathcal L_\infty(x)$ is also the unique factorization with $b + c \le 1$; and

\item 
for each $a \ge 0$, the factorizations $(a,1,1)$ and $(a+2m-1,0,0)$ of $x = 3(a + 2m - 1)$ have the two highest $\infty$-lengths in $\mathcal L_\infty(x)$.

\end{enumerate}

Fix $x \in S$ and $z = (a,b,c) \in \mathsf Z(x)$.  Suppose $z$ does not have maximal $\infty$-length, and let $G$ be minimal with $\ell_\infty(z) + G \in \mathcal L_\infty(x)$.  By~(iii), if $b = c = 1$, then $G \ge 2m$.  Otherwise, by~(ii) either $b \ge 2$ or $c \ge 2$.  If $b \ge 2$, then fixing $q \in \ZZ$ with $b - 2q \in \{0,1\}$ and performing the second trade in~(i) $q$~times yields a chain of factorizations 
$$(a,b,c) \sim (a + m, b - 2, c + 1) \sim \cdots \sim (a + qm, b - 2q, c + q),$$
wherein each factorization differs in $\infty$-length from the previous factorization by at most $m$, and the final factorization in which has strictly larger $\infty$-length than $z$.  As~such, we have $G \le m$.  By an analogous argument, if $c \ge 2$, then $G \le m + 1$.  This proves $m+2, \ldots, 2m-1 \notin \Delta_\infty(S)$.  

Now, by~(i), we have $\max \Delta_\infty(S) \le 2m + 1$.  We can see by inspection that
$$
\mathsf Z(6m + 3) = \{(2m+1,0,0), (0,1,1)\}
\quad \text{and} \quad
\mathsf Z(6m + 6) = \{(2m+2,0,0), (1,1,1)\},
$$
so $2m, 2m + 1 \in \Delta_\infty(S)$.  Also by inspection,
$$
\mathsf Z(6m + 8) = \{(m+2,0,1), (2,2,0)\}
\quad \text{and} \quad
\mathsf Z(6m + 10) = \{(m+3,1,0), (2,0,2)\},
$$
so $m, m + 1 \in \Delta_\infty(S)$.  
Lastly, for each $G \in \{1, \ldots, m-1\}$, we have
$$
(0,0,G+1), (m+1,1,G-1) \in \mathsf Z(x)
$$
for $x = (G+1)(3m+2)$.  Fix a factorization $z = (a,b,c) \in \mathsf Z(x)$.  Since
$$
3a = x - (3m+1)b + (3m+2)c = (G+1-b-c)(3m+2) + b,
$$
we must have $b + c \le G+1$.  If $b+c = G+1$, then $a = \tfrac{1}{3}b < G + 1$, so $\ell_\infty(z) \le G + 1$.  If $b+c \le G$, then 
$$
3a
\ge (G+1-b-c)(3m+2)
\ge 3m + 2
$$
so $\ell_\infty(z) \ge a \ge m+1$.  This proves $m - G \in \Delta_\infty(S)$.  
\end{proof}

\section{Some families of 0-delta sets}
\label{sec:0deltas}

In a similar vein to the prior section, in Theorems~\ref{t:del0families} and~\ref{t:del0gen3} we characterize $\Delta_0(S)$ for numerical semigroups $S$ residing in several well-studied families, including maximal embedding dimension numerical semigroups \cite[Chapter~3]{numerical}, supersymmetric numerical semigroups~\cite{supersymmetric}, 3-generated numerical semigroups~\cite[Chapter~10]{numerical}, and numerical semigroups generated by generalized arithmetic sequences~\cite{omidalirahmati}.  We also identify two families of numerical semigroups achieving notable extremal behavior (Theorems~\ref{t:del0interval} and~\ref{t:del0gaps}).  First, we demonstrate that the structure of $\mathcal L_0(x)$ for large $x \in S$ differs substantially from that of $\mathcal L_\infty(x)$ detailed in Section~\ref{sec:structurethm}.  

\begin{thm}\label{t:del0eventual}
For all $x \gg 0$, we have $\Delta_0(x) = \{1\}$.  In particular, $1 \in \Delta_0(S)$.  
\end{thm}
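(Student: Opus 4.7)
The plan is to show that for all sufficiently large $x \in S$, the set $\mathcal L_0(x)$ is a non-trivial interval of consecutive integers ending at $k$, from which $\Delta_0(x) = \{1\}$ follows immediately, and the in-particular conclusion $1 \in \Delta_0(S)$ is then automatic.

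First, I would verify that $k \in \mathcal L_0(x)$ for every $x > \mathsf F(S) + A$: the element $x - A$ lies in $S$, so any $z' \in \mathsf Z(x - A)$ lifts to $z' + (1, \ldots, 1) \in \mathsf Z(x)$, which has support $\{1, \ldots, k\}$ and realizes $\ell_0 = k$.

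Second, I would use a trade argument to show that for sufficiently large $x$, whenever $t \in \mathcal L_0(x)$ with $t < k$, one also has $t + 1 \in \mathcal L_0(x)$. Given $z \in \mathsf Z(x)$ with $|\supp(z)| = t$ and any $j \notin \supp(z)$, if some $i \in \supp(z)$ satisfies $z_i > a_j$ then the trade $z' = z - a_j e_i + a_i e_j$ is another factorization of $x$ with support $\supp(z) \cup \{j\}$, of size $t + 1$. If no such $i$ existed then every coordinate of $z$ on $\supp(z)$ would be bounded by $a_k$, forcing $x = \sum_{i \in \supp(z)} z_i a_i \le k a_k^2$; hence for $x > k a_k^2$ the support-increase step always succeeds, and $\mathcal L_0(x)$ has no internal gaps below $k$.

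Third---and this is the main obstacle---I would show that $\mathcal L_0(x) \ne \{k\}$ for $x \gg 0$, i.e., that $x$ always admits some factorization with support strictly smaller than $k$. A natural route is to locate an index $j$ for which $g_j \mid x$ and $x/g_j$ exceeds the Frobenius number of $S_j$, so that $x \in \langle a_i : i \ne j \rangle$ and $x$ therefore has a factorization omitting $a_j$, of support size at most $k - 1$. Confirming that a suitable $j$ can always be found for $x$ sufficiently large is the delicate step, requiring a careful analysis of how the residue class of $x$ interacts with the collection $\{g_j\}$; I expect this to be the technical crux of the argument.

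Putting the three steps together, $\mathcal L_0(x) = \{c, c+1, \ldots, k\}$ for some $c < k$ whenever $x$ is sufficiently large, and therefore $\Delta_0(x) = \{1\}$. The in-particular statement $1 \in \Delta_0(S)$ is immediate from the existence of a single such $x$.
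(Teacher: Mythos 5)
Your steps~1 and~2 are correct and together show that for $x \gg 0$ the set $\mathcal L_0(x)$ is an interval with top element $k$; this is the same intermediate fact the paper establishes, but you reach it by a different route. The paper characterizes when $x$ admits a factorization with support exactly $I$ via the divisibility $d_I \mid x$ (where $d_I = \gcd(a_i : i \in I)$) together with a Frobenius-number bound for $\langle \tfrac{1}{d_I}a_i : i \in I \rangle$, and then observes that for $x$ large the family of achievable supports is upward closed, since $d_J \mid d_I$ whenever $I \subseteq J$. Your explicit trade $z \mapsto z - a_j e_i + a_i e_j$ with the bound $x > k a_k^2$ is a more hands-on way to obtain the same upward-closure; both are fine.

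The trouble is step~3, and it is not merely delicate --- it is false in general. Take $S = \langle 6, 10, 15 \rangle$ and any $x \in S$ with $x \equiv 1 \pmod{30}$, for instance $x = 31 + 30m$ for $m \ge 0$ (all of which lie in $S$ since $31 = 6 + 10 + 15$ and $30 = 6 \cdot 5$). Since $x$ is odd, any factorization $(z_1,z_2,z_3)$ of $x$ must have $z_3 \ge 1$; since $3 \nmid x$ it must have $z_2 \ge 1$; and since $5 \nmid x$ it must have $z_1 \ge 1$. So every factorization of $x$ has full support, giving $\mathcal L_0(x) = \{3\}$ and $\Delta_0(x) = \varnothing$ no matter how large $x$ is; the index $j$ you hoped to locate with $g_j \mid x$ simply does not exist. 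What actually follows from your steps~1 and~2 (and from the paper's own argument) is the weaker statement $\Delta_0(x) \subseteq \{1\}$ for $x \gg 0$, i.e., $\mathcal L_0(x)$ is an interval; the displayed equality $\Delta_0(x) = \{1\}$ should be read in that sense. Finally, the ``in particular'' conclusion $1 \in \Delta_0(S)$ does not require step~3 at all: it suffices to produce one $x$ with $|\mathcal L_0(x)| \ge 2$, and $x = N a_1$ for $N$ large works, since it has a support-$\{1\}$ factorization as well as (once $N a_1 - A > \mathsf F(S)$) a full-support factorization, so the interval $\mathcal L_0(x)$ contains at least two values and $1 \in \Delta_0(x) \subseteq \Delta_0(S)$. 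So: drop step~3, weaken the conclusion to $\Delta_0(x) \subseteq \{1\}$, and add the one-line argument for $1 \in \Delta_0(S)$.
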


\begin{proof}
For fixed $I \subseteq \{1, \ldots k\}$ nonempty, and letting $d = \gcd(a_i : i \in I)$, $x \in S$ has a factorization $z \in \mathsf Z(x)$ with $\supp(z) = I$ if and only if $d \mid x$ and 
$$\tfrac{1}{d}(x - \textstyle\sum_{i \in I} a_i) > \mathsf F(\<\tfrac{1}{d}a_i : i \in I\>).$$
As such, for $x \gg 0$, if $x$ has a factorization with support $I$, then $x$ also has a factorization with support $J$ for every $J \supseteq I$.  Thus, $\mathcal L_0(x)$ is an interval and $\Delta_0(x) = \{1\}$.  
\end{proof}

\begin{thm}\label{t:del0families}
The following hold.  
\begin{enumerate}[(a)]
\item 
If $S$ has a minimal presentation $\rho$ in which every trade if between factorizations with singleton support, then $\Delta_0(S) = 1$.  In particular, this occurs whenever $S$ is supersymmetric or generated by a geometric sequence.  

\item 
If $S = \<m, a_1, \ldots, a_{m-1}\>$ with $m \ge 3$ and each $a_i \equiv i \bmod m$ (i.e., $S$ is maximal embedding dimension), then $\Delta_0(S) = \{1,2\}$.  

\item 
If $S = \<a, ah + d, ah + 2d,\ldots, ah + kd\>$ with $h \ge 1$, $2 \le k < a$, and $\gcd(a,d) = 1$ (i.e., $S$ is generated by a generalized arithmetic sequence), then $\Delta_0(S) = \{1,2\}$.  

\end{enumerate}
\end{thm}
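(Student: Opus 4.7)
The strategy rests on this general principle: if every trade $z \sim z'$ in some minimal presentation of $S$ satisfies $\max(|\supp z|, |\supp z'|) \le t$, then applying it to any factorization $y \in \mathsf Z(x)$ alters $\ell_0(y)$ by at most $t$, since the indices it can remove from $\supp y$ lie in $\supp z$ while those it can add lie in $\supp z'$.  Because any two factorizations of $x$ are joined by a chain of such trades whose intermediate factorizations all lie in $\mathsf Z(x)$, consecutive elements of $\mathcal L_0(x)$ cannot differ by more than $t$, so $\Delta_0(S) \subseteq \{1, \ldots, t\}$.

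Part (a) is immediate with $t = 1$: the hypothesis directly supplies the required presentation, yielding $\Delta_0(S) \subseteq \{1\}$, and Theorem~\ref{t:del0eventual} supplies the reverse inclusion.  The ``in particular'' clause follows by verifying the singleton-support hypothesis against the minimal presentations $p_{i+1}e_i \sim p_ie_{i+1}$ (supersymmetric) and $b e_i \sim a e_{i+1}$ (geometric) recorded in the proof of Theorem~\ref{t:delinffamilies}.

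Parts (b) and (c) follow the same template with $t = 2$, reducing each to two tasks: (i) exhibit a minimal presentation whose trades have support at most $2$ on each side, and (ii) exhibit an $x \in S$ witnessing $2 \in \Delta_0(x)$.  For (i) in (b), maximum embedding dimension forces $\Ap(S;m) = \{0, a_1, \ldots, a_{m-1}\}$, and each R-class of each Betti element has a representative of support at most $2$ (of the form $e_i + e_j$, $2 e_i$, $c e_0 + e_\ell$, or $c e_0$), from which the desired presentation is assembled; for (i) in (c), the explicit minimal presentation of a generalized arithmetic sequence (cf.\ \cite{omidalirahmati}) consists of swap trades $e_i + e_j \sim e_{i-1} + e_{j+1}$ together with auxiliary trades involving $a$, all of support at most $2$.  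Thus $\Delta_0(S) \subseteq \{1,2\}$ in both cases.

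For task (ii) in (b), take $x = m + a_1 + \cdots + a_{m-1}$: the all-ones factorization realizes support size $m$, and no factorization of $x$ has support of size $m - 1$, since a factorization with support missing index $j$ would, on subtracting the all-ones-minus-$e_j$ vector, furnish a nonnegative expression of the minimal generator $a_j$ in the remaining $a_i$'s, contradicting its minimality.  Applying a trade $e_i + e_j \sim c e_0 + e_\ell$ to $(1, \ldots, 1)$ produces a factorization of support size $m - 2$, so the spacing bound delivers $2 \in \Delta_0(x)$.  An analogous but parameter-dependent argument handles (c), with witness either $x = a + a_1 + \cdots + a_k$ (whenever a support-$\le 2$ trade is applicable to the all-ones factorization, as in the case $a = 3$ or $k \ge 3$) or else $x = a \cdot a_1$ (whose two size-$1$ factorizations $(a_1, 0, \ldots, 0)$ and $(0, a, 0, \ldots, 0)$, together with a size-$3$ factorization $(h, a - 2, 1, 0, \ldots, 0)$ arising from the relation $2 a_1 = h a + a_2$, combine with a direct mod-$a$ check ruling out support-$2$ factorizations).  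Combined with $1 \in \Delta_0(S)$ from Theorem~\ref{t:del0eventual}, we conclude $\Delta_0(S) = \{1, 2\}$ in each case.  The principal technical obstacles are task (i) in (c), which depends on the structural description of \cite{omidalirahmati} rather than the clean $\Ap(S;m)$ argument of (b), and task (ii) in (c), where the choice of witness must be made carefully in a few parameter regimes.
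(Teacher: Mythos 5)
Parts (a) and (b) match the paper's approach and are sound. Your general principle (a trade whose two sides each have support at most $t$ changes $\ell_0$ by at most $t$) is exactly the paper's reasoning, and your witness argument in (b) — that a support-$(m-1)$ factorization of $m + a_1 + \cdots + a_{m-1}$ would, after subtracting the all-ones-minus-$e_j$ vector, produce a factorization of the minimal generator $a_j$ avoiding index $j$ — is a clean unpacking of the paper's terser claim.

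For part (c), the bound $\Delta_0(S) \subseteq \{1,2\}$ via the Omidali--Rahmati presentation matches the paper, but your witness construction for $2 \in \Delta_0(S)$ departs from the paper's (which uses the explicit element $a(1+d+h(q+1))$) and contains a genuine error. You assert that $x = a \cdot a_1$ admits no support-2 factorization by a ``direct mod-$a$ check.'' That check only gives, for a factorization supported on $\{0,i\}$, that $z_i \cdot i \equiv 0 \bmod a$, hence $z_i \ge a/\gcd(a,i)$ — which is not enough when $\gcd(a,i) > 1$. Concretely, take $S = \langle 4,5,6 \rangle$ (so $a=4$, $h=d=1$, $k=2$, which satisfies all the hypotheses): then $x = a a_1 = 20$ has the support-2 factorization $(2,0,2)$, since $2\cdot 4 + 2\cdot 6 = 20$, and in fact $\mathcal L_0(20) = \{1,2,3\}$, so $\Delta_0(20) = \{1\}$. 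Your fallback witness $x = a + a_1 + \cdots + a_k$ is also unreliable outside the pure arithmetic case $h=1$: for $S = \langle 4,9,10 \rangle$ (so $h=2$), $x = 23$ has the \emph{unique} factorization $(1,1,1)$, giving $\Delta_0(23) = \emptyset$; and your claim that a support-$\le 2$ trade applies to the all-ones factorization ``whenever $a=3$ or $k\ge 3$'' does not hold uniformly, since for $h>1$ there is no trade linking $a_1 + a_{k-1}$ to $a_0 + a_k$. So the case analysis you sketch does not close, and the key claim (no support-2 factorization of $a a_1$) is simply false in examples. To repair this you would need a different family of witnesses; note for instance that in $\langle 4,5,6\rangle$ the element $15 = 4+5+6 = 3\cdot 5$ has $\mathcal L_0 = \{1,3\}$, and in $\langle 4,9,10 \rangle$ the element $27 = 3\cdot 9 = 8 + 9 + 10$ has $\mathcal L_0 = \{1,3\}$ — so a witness of the form $(q+1)a_j$ for suitable $j$ appears to be what is actually needed in these edge cases, rather than $a a_1$ or the all-ones element.
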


\begin{proof}
Part~(a) follows from the fact that any two factorizations of an element $x \in S$ are connected by a chain of factorizations in which successive factorizations $z, z'$ differ by a trade in $\rho$, and thus satisfy $|\ell_0(z) - \ell_0(z')| \le 1$.  As such, $\Delta_0(x) = \{1\}$.  The claims about supersymmetric numerical semigroups and semigroups generated by geometric sequences immediately follow~\cite{supersymmetric,compseqs}.  

For part~(b), by \cite[Theorem~8.30]{numerical} $S$ has a minimal presentation in which each trade has the form
$$
e_i + e_j \sim e_k + c e_0
\qquad \text{with} \qquad
i + j \equiv k \bmod m
\qquad \text{and} \qquad
c \in \ZZ_{\ge 1},
$$
so by similar reasoning to part~(a), $\Delta_0(S) \subseteq \{1,2\}$.  Moreover, since $m \ge 3$, applying the trade with $i = 1$ and $j = 2$ to the factorization $z = e_0 + \cdots + e_{m-1}$ yields a factorization $z'$ with $\ell_0(z') = m-2$.  Moreover, no other factorization $z''$ can have $\ell_0(z'') = m-1$, as then the trade $z \sim z''$ would be between distinct factorizations for a minimal generator of $S$.  

For part~(c), in the minimal presentation for $S$ presented in \cite[Theorem~2.16]{omidalirahmati}, each trade is between factorizations with 0-length at most 2, so $\Delta_0(S) \subseteq\{1,2\}$.  Moreover, writing $a - 1 = qk + r$ with $0 \le r < k$, the minimal presentation in \cite{omidalirahmati} also implies
$$
x = a + (ah + (r+1)d) + q(ah + kd) = a(d + h(q+1))
$$
are the only two factorizations of $x$, so $\Delta_0(x) = \{2\}$.  
\end{proof}

We next characterize $\Delta_0(S)$ when $S$ is 3-generated.  Recall that an expression 
$$
S = t'S' + t''S''
\qquad \text{with} \qquad
S' = \<b_1, \ldots, b_r\>
\qquad \text{and} \qquad
S'' = \<c_1, \ldots, c_{k-r}\>
$$
is called a \emph{gluing} if $t' \in S'' \setminus \{c_1, \ldots, c_{k-r}\}$, $t'' \in S' \setminus \{b_1, \ldots, b_r\}$, and $\gcd(t', t'') = 1$; see~\cite[Chapter~9]{numerical} for more on gluings.  Note~that such an expression for $S$ need not be unique.  In particular, if $S = \<a_1,a_2,a_3\>$, then there can be up to 3 such expressions for $S$ as a gluing, each of the form $S = \<a_i\> + t'S'$ for some $i \in \{1,2,3\}$.  

\begin{thm}\label{t:del0gen3}
Suppose $S = \<a_1, a_2, a_3\>$. If $S$ has at most 1 expression as a gluing, then $\Delta_0(S) = \{1,2\}$.  Otherwise, $\Delta_0(S) = \{1\}$.  
\end{thm}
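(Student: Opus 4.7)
The plan is to handle the two cases of the dichotomy separately. Since each factorization has support of size at most $3$, we have $\mathcal L_0(x) \subseteq \{1,2,3\}$ for every $x$, so $\Delta_0(S) \subseteq \{1,2\}$; combined with $1 \in \Delta_0(S)$ from Theorem~\ref{t:del0eventual}, the problem reduces to determining whether $2 \in \Delta_0(S)$, i.e., whether some $x \in S$ satisfies $\mathcal L_0(x) = \{1,3\}$.

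For $S$ with two or more gluings, the strategy is to construct a minimal presentation in which every trade is between single-support factorizations, and then apply Theorem~\ref{t:del0families}(a). The key lemma is that a gluing of $S$ at index $i$ (corresponding to $g_i a_i = \alpha a_j + \beta a_k$ arising from writing $a_i = \alpha(a_j/g_i) + \beta(a_k/g_i)$ in $\langle a_j/g_i, a_k/g_i\rangle$) admits a single-to-single presentation trade exactly when $a_j \mid g_i a_i$ or $a_k \mid g_i a_i$. A short divisibility argument using $\gcd(a_i, g_i) = 1$ and $\gcd(a_1, a_2, a_3) = 1$ then shows this divisibility at index $i$ forces a second gluing at the third index; hence having $\ge 2$ gluings lets each gluing trade be taken single-to-single simultaneously, and combined with the automatically single-to-single inner trade $(a_k/g_i) e_j \sim (a_j/g_i) e_k$ arising from the gluing's second block, this yields the desired all-single-to-single minimal presentation.

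For $S$ with exactly one gluing, at index $i$, I would take $x = (g_i + 1) a_i$. By the contrapositive of the lemma above, the gluing trade $g_i e_i \sim \alpha e_j + \beta e_k$ must have $\alpha, \beta > 0$, and since $g_i \nmid a_i$ (because $\gcd(a_i, g_i) = 1$ and $g_i > 1$), the only support-$1$ factorization of $x$ is $(g_i + 1) e_i$. The R-class of this factorization reaches $e_i + \alpha e_j + \beta e_k$ (support $3$) via the gluing trade, and further chain-moves $e_i + (\alpha - A) e_j + (\beta + B) e_k$ via the inner trade (with $A = a_k/g_i$, $B = a_j/g_i$), each of which must retain both $j$- and $k$-entries positive, since otherwise a single-to-single gluing option would contradict the single-gluing hypothesis. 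Hence $\mathcal L_0(x) = \{1,3\}$, witnessing $2 \in \Delta_0(S)$.

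For $S$ with no gluings (non-complete-intersection), the minimal presentation has three trades $c_i e_i \sim c_{ij} e_j + c_{ik} e_k$, and using the classical bound $c_{ij} < c_j$ one selects an index $i$ with trade $T_i$ truly single-to-double ($c_{ij}, c_{ik} > 0$) and with $(c_i+1)a_i$ not divisible by $a_l$ for any $l \neq i$, then verifies via a chain-of-trades analysis that the factorization set of $(c_i + 1) a_i$ consists only of $(c_i+1)e_i$ together with support-$3$ factorizations. The main obstacle I anticipate is establishing such an index always exists, since semigroups like $\langle 3, 7, 8\rangle$ can have ``hidden'' single-to-single trades in their minimal presentation (from $\lcm$-type divisibility relations, not from gluings) that make the naive choice of $i$ fail; the argument likely requires a case analysis on which trades in the minimal presentation are single-to-single, or in edge cases uses a Betti element of $S$ directly as the witness (e.g., $\mathcal L_0(21) = \{1,3\}$ for $S = \langle 3,7,8\rangle$ with $21 = c_2 a_2$).
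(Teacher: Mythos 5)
Your overall plan mirrors the paper's: reduce to whether $2 \in \Delta_0(S)$, then case on the number of gluings, using Theorem~\ref{t:del0families}(a) for the multi-gluing case and explicit witnesses otherwise. The single-gluing case is handled identically (your $x = (g_i+1)a_i$ is the paper's $(t+1)a_1$). In the $\geq 2$ gluings case you take a different route: the paper instead writes $S = \langle t'b_1, t't''b_2, t''b_3\rangle$ directly from the two gluings and shows $t'b_1$ has a singleton-support factorization in $S' = \langle t'b_2, b_3\rangle$, which is shorter; your key-lemma approach (gluing trade at $i$ is single-to-single iff $a_j \mid g_ia_i$ or $a_k \mid g_ia_i$, and this divisibility is forced by a second gluing) would also work, but you state only the forward implication of the forcing, whereas what you need is the converse direction, and that step is not obviously short.

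The genuine gap is the no-gluing case, and your stated concern there is a red herring. By the structure theorem for non-symmetric 3-generated numerical semigroups (the paper cites \cite[Section~10.3]{numerical}), when $S$ is not a gluing its unique minimal presentation consists of exactly three trades $c_ie_i \sim r_{ij}e_j + r_{ik}e_k$ with \emph{every} $r_{lm} \geq 1$; there are no ``hidden'' single-to-single trades. Concretely for $\langle 3,7,8\rangle$ the minimal presentation is $\{5e_1 \sim e_2 + e_3,\ 2e_2 \sim 2e_1 + e_3,\ 2e_3 \sim 3e_1 + e_2\}$; the relation $7a_1 = 3a_2 = 21$ is not a minimal relation, $21$ is not a Betti element (the Betti elements are $14,15,16$), and $c_2a_2 = 14 \neq 21$. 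The actual difficulty you leave unresolved is controlling which factorizations of $(c_i+1)a_i$ can appear: for some choices of $i$ there are extra singleton-support factorizations (e.g., when $r_{ji} = r_{jk} = 1$ one can pass from $e_i + r_{ij}e_j + r_{ik}e_k$ to $(c_j+1)e_j$). The paper handles this with a four-way case analysis on which $r_{lm}$ equal $1$ versus exceed $1$, using witnesses $a_1+a_2+a_3$ and $(c_i+1)a_i$ for suitably chosen $i$. That case analysis is the missing piece in your sketch.
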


\begin{proof}
If $S$ has at least 2 distinct expressions $S = \<a_i\> + t'S' = \<a_j\> + t''S''$ as a gluing, then we can write
$$
S = \<t'b_1, t't''b_2, t''b_3\>.
$$
Since $t'b_1 \in S' = \<t'b_2, b_3\>$, there exist $z_2, z_3 \in \ZZ_{\ge 0}$ with $t'b_1 = z_2 t'b_2 + z_3 b_3$, and since $\gcd(t',b_3) = 1$, we must have $t' \mid z_2$.  As such, $b_1 = z_1b_2 + z_2b_3$ and thus $b_1 \in \<b_2,b_3\>$.  By~similar reasoning, we know $b_3 \in \<b_1,b_2\>$.  Assuming $b_1 \le b_3$ without loss of generality, this is only possible if $b_1 = b_3$ or $b_2 \mid b_1$.  In particular, $t'b_1$ has a factorization in $S'$ with singleton support.  As~such, by \cite[Theorem~9.2]{numerical}, $S$ has a minimal presentation within which every factorization has singleton support, so $\Delta_0(S) = \{1\}$ by Theorem~\ref{t:del0families}(a).  

Conversely, suppose $S = \<a_1\> + tS'$ with $S' = \<b_1,b_2\>$ is the only expression of $S$ as a gluing.  Then writing $a_1 = z_1b_1 + z_2b_2$, we cannot have $z_2 = 0$, as otherwise 
$$S = \<z_1b_1, t'b_1, t'b_2\> = \<t'b_2\> + b_1\<z_1,t'\>$$
is a second expression of $S$ as a gluing.  Analogously, $z_1 > 0$.  As such, 
$$x = (t + 1)a_1 = a_1 + z_2 t' b_1 + z_3 t' b_2$$
has $\mathcal L_0(x) = \{1,3\}$, so $\Delta_0(x) = \{2\}$.  

This leaves the case where $S$ cannot be expressed as a gluing.  By \cite[Section~10.3]{numerical}, $S$ has a unique minimal presentation comprised of trades
$$
c_1e_1 \sim r_{12}e_2 + r_{13}e_3,
\qquad
c_2e_2 \sim r_{21}e_1 + r_{23}e_3,
\qquad \text{and} \qquad
c_3e_3 \sim r_{31}e_1 + r_{32}e_2
$$
where 
each $r_{ij} > 0$ and each $c_k = r_{ik} + r_{jk}$ for $\{i,j,k\} = \{1,2,3\}$.  We consider cases.  
\begin{itemize}
\item 
If $r_{ij} = r_{ik} = 1$ for some $i$, then $x = a_1 + a_2 + a_3$ has at least one factorization without full support, and any such factorization must have singleton support, so $\Delta_0(x) = \{2\}$.  

\item 
If $r_{ji} \ge 2$ and $r_{ki} \ge 2$ for some $i$, then 
$$x = (c_i + 1)a_i = a_i + r_{ij} a_j + r_{ik} a_k$$
are the only factorizations of $x$, so $\Delta_0(x) = \{2\}$.  

\item 
If $r_{ji} = r_{ki} = 1$ for some $i$, then either $r_{jk} \ge 2$ and $r_{kj} \ge 2$, in which case
$$x = (c_j + 1)a_j = a_j + r_{ji} a_i + r_{jk} a_k$$
are the only factorizations of $x$ and $\Delta_0(x) = \{2\}$, or $r_{jk} = 1$ or $r_{kj} = 1$, meaning we are in the first case above.  

\item 
In all other cases, after possibly reordering $i$, $j$, and $k$, we have $r_{ij} = r_{jk} = r_{ki} = 1$ while $r_{ji}, r_{kj}, r_{ik} \ge 2$.  In this case,
$$x = (c_j + 1)a_j = a_j + r_{ji} a_i + a_k$$
are the only factorizations of $x$, so $\Delta_0(x) = \{2\}$.  

\end{itemize}
In all cases above, we conclude $\Delta_0(S) = \{1,2\}$.  
\end{proof}

Thus far, all semigroups $S$ presented have $\max \Delta_0(S) \le 2$.  We close by presenting two families of numerical semigroups exhibiting more interesting behavior:\ one demonstrating $\Delta_0(S)$ can be an arbitrarily large interval (Theorem~\ref{t:del0interval}), and another demonstrating $\Delta_0(S) \setminus [1, \max \Delta_0(S)]$ can be arbitrarily large (Theorem~\ref{t:del0gaps}).  

\begin{thm}\label{t:del0interval}
For each $k \ge 2$, there exists a numerical semigroup $S$ such that $\Delta_0(S) = \{1, 2, \ldots, k-1\}$.  
\end{thm}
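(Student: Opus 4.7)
The plan is to construct, for each $k \ge 2$, an explicit numerical semigroup $S_k$ of embedding dimension exactly $k$ with $\Delta_0(S_k) = \{1, 2, \ldots, k-1\}$. Since every factorization $z$ then satisfies $\ell_0(z) \in \{1, \ldots, k\}$, consecutive elements of $\mathcal{L}_0(x)$ can differ by at most $k-1$, so $\Delta_0(S_k) \subseteq \{1, \ldots, k-1\}$ is automatic. Combined with $1 \in \Delta_0(S_k)$ from Theorem~\ref{t:del0eventual}, the task reduces to exhibiting, for each $g \in \{2, \ldots, k-1\}$, an element $x_g \in S_k$ with $g \in \Delta_0(x_g)$.

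The core construction takes $S_k = \langle a_1, a_2, \ldots, a_k \rangle$ with minimal generators $a_1 < \cdots < a_k$ satisfying $a_2 + \cdots + a_k = (n-1)a_1$ for some $n \ge 2$, so that the sum element $x = na_1 = a_1 + \cdots + a_k$ admits both the single-support factorization $(n, 0, \ldots, 0)$ and the full-support factorization $(1, 1, \ldots, 1)$. If the $a_i$'s are further tuned so that no proper intermediate-size subset $I \subsetneq \{1, \ldots, k\}$ supports a factorization of $x$, then $\mathcal{L}_0(x) = \{1, k\}$ and the maximal gap $k-1$ is realized. This template works in small cases: $S_3 = \langle 4, 5, 7 \rangle$ gives $x = 16$ with $\mathsf Z(16) = \{(4,0,0), (1,1,1)\}$, and $S_4 = \langle 11, 12, 14, 18 \rangle$ gives $x = 55$ with $\mathsf Z(55) = \{(5,0,0,0), (1,1,1,1)\}$, so the gap $k-1$ indeed appears.

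To realize each intermediate gap $g \in \{2, \ldots, k-2\}$ within the same $S_k$, I identify auxiliary elements whose factorizations split into two support-size bands differing by $g$. Specifically, I look for elements of the form $m \cdot a_1$ coinciding with a subsum $a_{i_1} + \cdots + a_{i_{g+1}}$ of $g+1$ generators; for instance, in $\langle 11, 12, 14, 18 \rangle$ we have $44 = 4 \cdot 11 = 12 + 14 + 18$ with $\mathsf Z(44) = \{(4,0,0,0), (0,1,1,1)\}$, yielding $\mathcal{L}_0(44) = \{1,3\}$ and $2 \in \Delta_0(S_4)$. A careful choice of the generator differences $a_i - a_1$ should allow such auxiliary identities to hold for every $g$ in the range, with the remaining factorization analysis confined to sub-semigroups of strictly smaller embedding dimension.

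The main obstacle is the Diophantine verification that no unwanted intermediate-support factorizations arise for any designated element $x_g$. For each proper subset $I \subsetneq \{1, \ldots, k\}$ of forbidden size, one must show $x_g - \sum_{i \in I} a_i$ is not expressible using positive coefficients on every $i \in I$. The pattern $a_i = (2^k - k - 1) + (2^{i-1} - 1)$ suggested by the $k = 3, 4$ examples is a natural candidate family, but appears to require modification for larger $k$ (already for $k = 5$ it produces an unwanted size-$2$ factorization of the sum). The heart of the full proof is to identify a family whose generator differences are chosen to block all such partial factorizations uniformly in $k$; for each fixed $k$ the verification reduces to a finite case analysis on the finitely many proper subsets of $\{1, \ldots, k\}$.
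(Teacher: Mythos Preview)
Your proposal is not a proof but a plan, and the plan has a genuine hole at its center. You correctly observe that embedding dimension~$k$ forces $\Delta_0(S_k) \subseteq \{1,\ldots,k-1\}$ and that $1 \in \Delta_0(S_k)$ is automatic, so the whole content of the theorem is to exhibit, for each $g \in \{2,\ldots,k-1\}$, an element $x_g$ with $g \in \Delta_0(x_g)$. But you never produce the family $\{a_1,\ldots,a_k\}$ that accomplishes this. Your candidate $a_i = (2^k-k-1)+(2^{i-1}-1)$ already fails at $k=5$, as you note, and the sentence ``the heart of the full proof is to identify a family whose generator differences are chosen to block all such partial factorizations uniformly in~$k$'' is an explicit admission that the key step is missing. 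A proof must supply that family and verify the blocking; a finite case analysis ``for each fixed $k$'' is not a proof valid for all $k$ unless the analysis is carried out uniformly.

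The paper's approach sidesteps this Diophantine difficulty entirely by building $S_k$ inductively through gluings: starting from $S_2 = \langle p_1,p_2\rangle$ and at each stage forming $S_i = p_iS_{i-1} + \langle a_i\rangle$ with $a_i = (k+1-i)a_1 + a_2 + \cdots + a_{i-1}$ and $p_i$ a large prime. The point is that gluings give complete control over the Betti elements and hence over a minimal presentation; one knows \emph{exactly} that $\mathsf Z(p_ia_i) = \{p_ie_i,\ (k+1-i)e_1 + e_2 + \cdots + e_{i-1}\}$, so $\Delta_0((p_i+1)a_i) = \{i-1\}$ with no need to rule out stray intermediate-support factorizations by hand. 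Your direct approach, by contrast, requires ad~hoc verification that every proper subset $I$ fails to support a factorization of each $x_g$, and you have not shown how to arrange this simultaneously for all $g$ and all $k$.
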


\begin{proof}
Fix distinct primes $p_1, p_2$ with $p_1, p_2 > k$.  
Let $S_2 = \<p_1, p_2\>$, so $\Delta_0(S_2) = \{1\}$.  
Proceeding inductively, assume $S_{i-1} = \<a_1, \ldots, a_{i-1}\>$ has Betti elements $b_1, \ldots, b_{i-2}$ with $\mathsf Z(b_1) = \{p_2e_1, \,  p_1e_2\}$ and for each $j \ge 2$,
$$
\mathsf Z(b_j) = \{p_{j+1}e_{j+1}, \, (k+1-j)e_1 + e_2 + \cdots + e_j\}
$$
for some prime $p_{j+1}$.  Since each $j \le k$, we have $\Delta_0(b_j + a_{j+1}) = \{j\}$ for each $j$.  Letting 
$$a_i = (k+1-i)a_1 + a_2 + \cdots + a_{i-1},$$
we see (i) the above factoriation of $a_i$ is not preceded (under the component-wise partial order) by any factorizations of $b_1, \ldots, b_{i-2}$ (meaning $a_i$ is uniquely factorable in $S_{i-1}$), and (ii) the above factorization of $a_i$ does not precede a factorization of any $b_j$.  
As~such, choosing a prime $p_i > a_i$, the semigroup 
$$
S_i = p_iS_{i-1} + \<a_i\>
$$
is a gluing, so we have $\Betti(S_i) = \{p_ib_1, \ldots, p_ib_{i-2}, p_ia_i\}$ and 
$$\mathsf Z(p_ia_i) = \{p_ie_i, \, (k+1-i)e_1 + e_2 + \cdots + e_{i-1}\}.$$
This ensures $\Delta_0((p_i+1)a_i) = \{i-1\}$ and $\Delta_0(S_i) = \{1, 2, \ldots, i-1\}$.  Thus, at the conclusion of this process, the semigroup $S_k$ has $\Delta_0(S_k)$ as claimed.  
\end{proof}

\begin{thm}\label{t:del0gaps}
For each $k \ge 16$, there exists a numerical semigroup $S = \<a_1, \ldots, a_{k+1}\>$ with $\Delta_0(S) \cap [\tfrac{7}{8}k, k] = \{k-1,k\}$.  
\end{thm}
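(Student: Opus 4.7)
The plan is to extend the iterative gluing construction of Theorem~\ref{t:del0interval} in two phases so that $\Delta_0(S)$ accumulates only small values from the first phase together with exactly the two extremal values $k-1$ and $k$ from the second phase, while avoiding the forbidden interval $[\lceil \tfrac{7}{8}k \rceil, k-2]$. For the first phase, perform $m-2$ iterations of the inductive gluing of Theorem~\ref{t:del0interval} starting from $S_2 = \langle p_1, p_2 \rangle$ to obtain $S_m = \langle a_1, \ldots, a_m \rangle$ with $\Delta_0(S_m) = \{1, 2, \ldots, m-1\}$; choose $m = \lceil \tfrac{7}{8}k \rceil$ so that these accumulated values all lie outside the forbidden range. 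For the second phase, append the remaining $k+1-m$ generators via a single large gluing $S = t' S_m + t'' S'$, where $S' = \langle b_1, \ldots, b_{k+1-m}\rangle$ is chosen with $\Delta_0(S') \subseteq \{1, 2\}$ (for instance an arithmetic-sequence semigroup, by Theorem~\ref{t:del0families}(c)), and with gluing parameters $t' \in S'$ admitting a full-support factorization in $S'$ and $t'' \in S_m$ equal to a large multiple of a single generator of $S_m$.

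The values $k - 1$ and $k$ enter $\Delta_0(S)$ through the gluing Betti element and one of its shifts. The element $M = t't''$ has two distinguished factorizations in $S$: one of support size $1$ (from the $S_m$-side, using only the single generator underlying $t''$) and one of support size $k$ (from the $S'$-side full-support factorization of $t'$). Assuming these are the only factorizations of $M$---a uniqueness claim that reduces to checking that $t'$ has a unique full-support factorization in $S'$ together with divisibility conditions forcing the bridging coefficient into $\{0, m\}$---we obtain $\Delta_0(M) = \{k - 1\}$. Adjoining one more copy of the bridging generator produces $M'$ with $\mathcal{L}_0(M') = \{1, k+1\}$, giving $\Delta_0(M') = \{k\}$.

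The main obstacle is verifying $\Delta_0(S) \cap [\lceil \tfrac{7}{8}k \rceil, k - 2] = \emptyset$ for the constructed $S$. By the gluing structure, every factorization of $x \in S$ decomposes as a pair $(y', y'') \in S_m \times S'$ with $t' y' + t'' y'' = x$, and for each fixed pair the contributed support sizes form a set governed by $\Delta_0(S_m) \cup \Delta_0(S')$, already confined to $\{1, \ldots, \lceil \tfrac{7}{8}k \rceil - 1\}$. Potentially problematic jumps can only arise from transitions between different pairs $(y', y'')$ via the exchange move $t' t'' \leftrightarrow t'' t'$. The critical technical step is showing that for every perturbation of the form $M + v$ with $v$ small, the rich trade structure of the arithmetic outer semigroup $S'$ (described in Theorem~\ref{t:delinfarith}, following \cite{omidalirahmati}) supplies factorizations of every intermediate support size between $2$ and $k$, so that the only surviving large jumps occur at $M$ and $M'$ themselves. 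The hypothesis $k \ge 16$ supplies the quantitative slack: it yields $\lceil \tfrac{7}{8}k \rceil - 1 \ge 13$, giving enough room in the first phase to build $S_m$ and enough generators in the outer semigroup for the trade-filling argument to handle every element of $S$ near $M$.
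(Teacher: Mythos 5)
Your construction cannot produce the large gaps $k-1$ and $k$ in $\Delta_0(S)$. In the gluing $S = t'S_m + t''S'$, the Betti element $M = t't''$ has exactly two ``distinguished'' factorizations: one using only the $t'S_m$-generators (whose support is $\supp$ of a factorization of $t''$ in $S_m$, so size at most $m$) and one using only the $t''S'$-generators (whose support is $\supp$ of a factorization of $t'$ in $S'$, so size at most $k+1-m$). With $m = \lceil\tfrac{7}{8}k\rceil$ and $t''$ a single-generator multiple, the first factorization has support $1$, but the second has support at most $k+1-m \approx \tfrac{1}{8}k$, \emph{not} $k$. So $\Delta_0(M)$ contributes a jump of at most $\tfrac{1}{8}k$, nowhere near $k-1$. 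Making the jump close to $k-1$ forces the ``full-support'' side to have $\approx k$ generators, i.e.\ $m$ close to $k$; but then phase~1 via Theorem~\ref{t:del0interval} gives $\Delta_0(S_m) = \{1,\ldots,m-1\}$, which floods exactly the interval you need to stay out of. The two goals --- a support-$k$ factorization of the Betti element, and keeping the inner semigroup's $\Delta_0$ bounded away from $[\tfrac{7}{8}k, k-2]$ --- are in direct tension in your setup, and $m$ cannot be chosen to satisfy both.

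The paper resolves this tension differently: it builds $S_k$ by a chain of gluings whose bridging trades $2e_i \sim 2e_{i-2} + e_{i-1}$ each change support by at most~$2$ (so $\Delta_0(S_k)$ stays small), and only the final gluing $S = 2S_k + \langle a_1+\cdots+a_k\rangle$, whose outer part is a \emph{single} generator, supplies the trade $2e_{k+1} \sim e_1 + \cdots + e_k$ with full support $k$ on the inner side. The hard part of the paper's proof is then showing that this large trade creates an unfillable gap only at $2a_{k+1}$ and $3a_{k+1}$: for any other element with a factorization involving $2e_{k+1}$, the specific ladder of trades inherited from the chain of gluings supplies intermediate factorizations whose supports descend in steps of size at most $\tfrac{7}{8}k$, using that $\tfrac{1}{8}k$ additional nonzero coordinates forbid too large a drop. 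Your sketch of the analogous step appeals to ``the rich trade structure of the arithmetic outer semigroup'' and cites Theorem~\ref{t:delinfarith}, but that result concerns $\Delta_\infty$, not $\Delta_0$, and in any case the outer semigroup in your setup is too small for its trades to control jumps of order $k$. You would need to replace both the choice of gluing data and the gap-filling argument to make this work.
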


\begin{proof}
Let $S_2 = \<2,3\>$.  Next, for each $i = 3, \ldots, k$, let 
$$
S_i = 2S_{i-1} + \<2a_{i-2} + a_{i-1}\>
\qquad \text{where} \qquad
S_{i-1} = \<a_1, \ldots, a_{i-1}\>.
$$
Lastly, let 
$$
S = S_{k+1} = 2S_k + \<a_1 + \cdots + a_k\>
\qquad \text{where} \qquad
S_k = \<a_1, \ldots, a_k\>.$$
As each $S_i$ is easily shown to be a gluing, the trades
$$
2e_2 \sim 3e_1,
\qquad
2e_{k+1} \sim e_1 + \cdots + e_k,
\qquad \text{and} \qquad
2e_i \sim 2e_{i-2} + e_{i-1}
\text{ for } 
3 \le i \le k
$$
form a minimal presentation $\rho$ of $S$.  

In what follows, write $S = \<a_1, \ldots, a_{k+1}\>$.  We see by inspection that
$$
\mathsf Z(3a_{k+1}) = \{3e_{k+1}, \, e_1 + \cdots + e_{k+1}\},
$$
since no other trades in $\rho$ can be performed, 
so in particular $\Delta_0(3a_{k+1}) = \{k\}$ and $\Delta_0(2a_{k+1}) = \{k-1\}$.  We claim every other $x \in S$ with $\Delta_0(x)$ nonempty has $\max \Delta_0(x) \le \tfrac{7}{8}k$.   Indeed, any two factorizations of $x$ can be connected by a sequence of trades in $\rho$, and of such trades, the only one that can yield a change in 0-length of more than 2 is the trade $2e_{k+1} \sim e_1 + \cdots + e_k$.  As such, consider factorizations $z, z' \in \mathsf Z(x)$ of the form 
$$
z = u + e_1 + \cdots + e_k
\qquad \text{and} \qquad
z' = u + 2e_{k+1}
$$
for some $u \in \ZZ_{\ge 0}^k$.  
By way of contradiction, suppose $\ell_0(u) \le \tfrac{1}{8}k$, so that 
$$\ell_0(z) - \ell_0(z') \ge k - \tfrac{1}{8}k = \tfrac{7}{8}k.$$

First, suppose $u_i \ge 1$ for some $i \le \tfrac{1}{2}k$, and fix $j$ maximal with $i + 2j \le k$.  Performing the trade 
$$
2e_i + e_{i+1} + e_{i+3} + \cdots + e_{i+2j-1} \sim 2e_{i+2j}
$$
to $z$ yields a factorization $z''$ in which $j \ge \tfrac{1}{4}k$ entries are strictly smaller than in $z$.  However, since $\ell_0(u) \le \tfrac{1}{8}k$, at least $\tfrac{1}{8}k$ entries of $z''$ must be zero.  As such, 
$$
\ell_0(z) - \ell_0(z'') \le \tfrac{1}{4}k
\qquad \text{and} \qquad
\ell_0(z'') - \ell_0(z') \le \tfrac{7}{8}k.
$$

Next, suppose $u_i \ge 1$ for $\tfrac{1}{2}k < i \le k$.
Performing the trade
\begin{align*}
2e_i + e_{i-1} + e_{i-3} + e_{i-5} + \cdots
&\sim 2e_{i-1} + 2e_{i-3} + 2e_{i-5} + \cdots
\\
&\sim e_{i-2} + 2e_{i-4} + 3e_{i-6} + \cdots
\end{align*}
to $z$ yields a factorization $z''$ in which at least $\tfrac{1}{4}k$ entries are strictly smaller than in $z$.  As in the previous case, we obtain
$$
\ell_0(z) - \ell_0(z'') \le \tfrac{1}{4}k
\qquad \text{and} \qquad
\ell_0(z'') - \ell_0(z') \le \tfrac{7}{8}k.
$$

Lastly, since $x \ne 2a_{k+1}, 3a_{k+1}$, the only remaining case is when $u = ca_{k+1}$ with $c \ge 2$.  In this case, one may performe the trade
\begin{align*}
4e_{k+1}
&\sim 2e_1 + 2e_2 + \cdots + 2e_k
\\
&\sim 3e_{k-1} + 4e_{k-3} + 5e_{k-5} + \cdots
\end{align*}
to obtain a factorization with at least $\tfrac{1}{2}k$ zero entries, which completes the proof.  
\end{proof}


\end{document}